\journal{Journal of Pure and Applied Algebra}
\patchcmd{\thmhead}{(#3)}{#3}{}{} 
\setlist[enumerate]{label=(\textit{\roman*}\hspace{.08em}),font=\rm,itemsep=0em}
\newcommand{\totikz}{\mathrel{\tikz[baseline]\draw[ ->,line width=.45pt] (0ex,0.65ex) -- (3ex,0.65ex);}}
\newcommand{\mapstotikz}{\mathrel{\tikz[baseline]\draw[|->,line width=.45pt] (0pt,0.65ex) -- (3ex,0.65ex);}}
\newcommand{\rightarrowtikz}{\mathrel{\tikz[baseline]\draw[->,line width=.45pt] (0ex,0.65ex) -- (3ex,0.65ex);}}
\newcommand{\hookrightarrowtikz}{\mathrel{\tikz[baseline]\draw[right hook->,line width=.45pt] (0ex,0.65ex) -- (3ex,0.65ex);}}
\newcommand{\leftrightarrowtikz}{\mathrel{\tikz[baseline]\draw[<->,line width=.45pt] (0ex,0.65ex) -- (3.5ex,0.65ex);}}
\newcommand{\toarg}[1]{\mathrel{\tikz[baseline]\path[->,line width=.45pt] (0ex,0.65ex) edge node[above=-.4ex, overlay, font=\scriptsize] {$#1$} (3.5ex,.65ex);}}
\newcommand{\wtilde}[1]{\mkern1.75mu\widetilde{\mkern-1.75mu #1\mkern-.25mu}\mkern.25mu}
\numberwithin{equation}{section}
\newtheorem{theorem}[equation]{Theorem}
\newtheorem*{theorem*}{Theorem}
\newtheorem{proposition}[equation]{Proposition}
\newtheorem{lemma}[equation]{Lemma}
\newtheorem*{lemma*}{Lemma}
\newtheorem{corollary}[equation]{Corollary}
\newtheorem*{corollary*}{Corollary}
\theoremstyle{definition}
\newtheorem{definition}[equation]{Definition}
\newtheorem{notation}[equation]{Notation}
\theoremstyle{remark}
\newtheorem{remark}[equation]{Remark}
\DeclareMathOperator{\Ext}{Ext}
\DeclareMathOperator{\Pic}{Pic}
\DeclareMathOperator{\Tot}{Tot}
\DeclareMathOperator{\im}{im}
\DeclareMathOperator{\coim}{coim}
\DeclareMathOperator{\Coh}{Coh}
\renewcommand{\H}{\operatorname{H}}
\newcommand{\overbar}[1]{{\mkern3.5mu\overline{\mkern-3.5mu #1\mkern-.5mu}\mkern.5mu}}
\begin{document}

\begin{frontmatter}

\title{Classical deformations of noncompact surfaces and their moduli of instantons}

\author[wwu]{Severin Barmeier}
\ead{s.barmeier@gmail.com}
\author[ucn]{Elizabeth Gasparim}
\ead{etgasparim@gmail.com}

\address[wwu]{Westf\"alische Wilhelms-Universit\"at M\"unster, Mathematisches Institut, Einsteinstr.\ 62, M\"unster, Germany}
\address[ucn]{Departamento de Matem\'aticas, Universidad Cat\'olica del Norte, Av.\ Angamos 0600, Antofagasta, Chile}

\begin{abstract}
We describe semiuniversal deformation spaces for the noncompact surfaces $Z_k := \Tot (\mathcal O_{\mathbb P^1} (-k))$ and prove that any nontrivial deformation $Z_k (\tau)$ of $Z_k$ is affine.

It is known that the moduli spaces of instantons of charge $j$ on $Z_k$ are quasi-projective varieties of dimension $2j-k-2$. In contrast, our results imply that the moduli spaces of instantons on any nontrivial deformation $Z_k (\tau)$ are empty.
\end{abstract}

\begin{keyword}
deformation of complex structures \sep holomorphic vector bundles \sep instantons

\MSC[2010] 32G05 \sep 14D21 \sep 14J60
\end{keyword}

\end{frontmatter}

\section{Motivation}
Our interest in deformations of noncompact surfaces and their moduli of vector bundles arose in an attempt to understand how instanton moduli vary in families.

Theories of instantons and their moduli spaces are often defined over noncompact manifolds, as is the case with the instanton partition function, defined by Nekrasov \cite{Ne} and explored by various authors, for instance \cite{NO,NY,GL,BE}.

We study how moduli spaces of instantons on the noncompact surfaces $Z_k = \Tot (\mathcal O_{\mathbb P^1} (-k))$ behave under deformation of the complex structure of the underlying surface. We construct a family of deformations for these surfaces and then study holomorphic vector bundles on the deformed surfaces. The Kobayashi--Hitchin correspondence associates instantons on complex surfaces to holomorphic vector bundles. Therefore, we describe holomorphic bundles on the deformed surfaces to obtain the corresponding information about instantons. The Kobayashi--Hitchin correspondence for the surfaces $Z_k$ was shown in \cite[Prop.\ 5.3]{GKM}; the proof uses compactification and appeals to the compact version of the correspondence as described in \cite{LT}.

In contrast to the deformation theory of {\it compact} complex manifolds as developed by Kodaira and Spencer \cite{kodaira}, a general deformation theory for {\it noncompact} complex manifolds has yet to be developed. Nevertheless, under certain additional assumptions the deformation theory of noncompact complex manifolds seems to be well-behaved: for example, in case the manifold admits a global holomorphic symplectic form \cite{kaledinverbitsky}, or when the manifold compactifies holomorphically, in which case the machinery for compact manifolds may be applied \cite{kawamata}. In \S \ref{classicaldeformations} we exploit the vector bundle structure on $Z_k$ and consider deformations as an affine bundle over $\mathbb P^1$. Relations to the deformations of Hirzebruch surfaces are given in \S \ref{relationtohirzebruch}.

The noncompact surfaces $Z_k$ admit a rich structure of moduli spaces of instantons. 
Some properties of these moduli are described in \cite{GKM}, where it is shown that such moduli spaces are quasi-projective varieties whose dimensions increase with the charge. Here we show that after a deformation of the complex structure, the moduli of instantons on the deformation are empty, {\it i.e.}\ instantons disappear after a ``classical'' deformation of $Z_k$. From the point of view of mathematical physics, our results suggest that to study the instanton moduli in families, deformations should be considered in a broader framework, including {\it noncommutative} deformations. In a future paper we will pursue this more general approach.

Even though the whole story for our original motivation has yet to be told, we decided to publish the results pertaining to classical deformations separately, because these are -- to the best of our knowledge -- the first results in this direction and, moreover, are turned out to be of independent interest: deformations of noncompact surfaces (quite mysteriously) appear in an unrelated body of work concerning the homological mirror symmetry conjecture from a Lie theoretical viewpoint (see \cite{BBGGS} and Remark \ref{mirrorsymmetry}).

The paper is organized as follows: In \S\S \ref{localsurfaces}--\ref{geometry} we introduce the noncompact surfaces $Z_k$ and their moduli of vector bundles. In \S \ref{classicaldeformations} we present deformations of these surfaces and study their moduli of vector bundles in \S \ref{geometrydeformations}. Applications to the theory of instantons are discussed in \S \ref{instantons}.

\section{Statement of results}

We consider noncompact surfaces that are total spaces of negative line bundles on the projective line, denoted by $Z_k := \Tot (\mathcal O_{\mathbb P^1} (-k))$, for $k \geq 1$.
Our first result (Theorem~\ref{family}) shows that $Z_k$ admits a $(k{-}1)$-dimen\-sional semiuniversal family of classical deformations, and we construct this family explicitly. Denoting by $Z_k (\tau)$ any nontrivial deformation of $Z_k$ for $k \geq 2$, our second result (Theorem~\ref{nocompactcurves}) shows that $Z_k (\tau)$ contains no compact complex analytic curve. 
Our third result (Theorem~\ref{decomposable}) shows that any holomorphic vector bundle 
on $Z_k (\tau)$ splits as a direct sum of algebraic line bundles. This is somewhat surprising, given the existence of nontrivial moduli of vector bundles on the original $Z_k$ surfaces proved in \cite{ballicogasparimkoppe2}. Our fourth result (Theorem~\ref{affine}) shows that any nontrivial deformation $Z_k (\tau)$ is affine.

These results imply that moduli of instantons on noncompact surfaces are sensitive to the complex structure: the instanton moduli of charge $j$ over the noncompact surfaces $Z_k$ are of dimension $2j - k - 2$, whereas $Z_k (\tau)$ admits no instantons (Theorem~\ref{empty}).

Let us put our results also into the context of deformations of curves and surfaces and their moduli. Grothendieck's splitting theorem says that any holomorphic vector bundle on $\mathbb P^1$ splits as a direct sum of (algebraic) line bundles. Neither the curve $\mathbb P^1$ itself nor its moduli spaces of vector bundles admit any deformations.

Curves of higher genus do admit deformations and a celebrated theorem of Narasimhan and Ramanan \cite{narasimhanramanan,balajivishwanath} shows that {\it all} classical deformations 
of the moduli of stable bundles on a smooth curve come from deformations of the curve itself (case $g > 1$, $(r, d) = 1$).

In higher dimensions deformations of the underlying space and deformations of sheaves may be related as follows. A pair $(X, \mathcal E)$ of a smooth projective surface $X$ and a, say, (semi)stable coherent sheaf $\mathcal E$ can be thought of as a point in the moduli space of semistable coherent sheaves over $X$. Deformations of the pair $(X, \mathcal E)$ are parametrized by a certain sheaf of differential graded Lie algebras \cite{iaconomanetti} whose cohomology groups $\mathrm T^i_{(X, \mathcal E)}$ fit into a long exact sequence
\begin{align}
\label{pairsexact}
\dotsb \totikz \Ext^i_X (\mathcal E, \mathcal E) \totikz \mathrm T^i_{(X, \mathcal E)} \totikz \H^i (X, \mathcal T_X) \totikz \Ext^{i+1}_X (\mathcal E, \mathcal E) \totikz \dotsb
\end{align}
If $\mathcal E$ is a smooth point of the moduli space, {\it i.e.}\ if $\Ext^2_X (\mathcal E, \mathcal E) = 0$, then (\ref{pairsexact}) gives a surjection $\mathrm T^1_{(X, \mathcal E)} \totikz \H^1 (X, \mathcal T_X)$ of tangent spaces and an injection $\mathrm T^2_{(X, \mathcal E)} \totikz \H^2 (X, \mathcal T_X)$ of obstruction spaces. A cocycle $\tau \in \H^1 (X, \mathcal T_X)$ parametrizing a deformation of $X$ thus lifts to a deformation of the pair $(X, \mathcal E)$.

From this point of view, our results for moduli of vector bundles on nontrivial deformations of $Z_k$ show that the deformed sheaf $\mathcal E (\tau)$ over a nontrivial deformation $Z_k (\tau)$ is rigid, in spite of the fact that $\mathcal E$ over $Z_k$ is not rigid. In fact, \cite[Thm.\ 4.11]{ballicogasparimkoppe2} shows that the moduli of framed-stable rank~$2$ bundles on $Z_k$ with {\it splitting type} $j$ (see Def.\ \ref{splittingtype}) and $c_1 = 0$ are quasi-projective varieties of dimension $2j-k-2$. (See \cite{ballicogasparim} for arbitrary $c_1$.)

\section{Noncompact surfaces}
\label{localsurfaces}

Let $Z_k$ be the total space of the line bundle $\mathcal O_{\mathbb P^1} (-k)$ for $k \geq 1$. We observe that $Z_1$ is $\widetilde {\mathbb C}^2$, the blowup of $\mathbb C^2$ at the origin, and $Z_2$ is a {\it local Calabi--Yau surface}.\footnote{In this terminology, motivated by physics, a {\it local Calabi--Yau} refers to the total space of a canonical bundle, see {\it e.g.}\ \cite{Br}. Informally, we sometimes refer to $Z_k$ as a {\it local surface} as in \cite{amilburubarmeiercallandergasparim,benbassatgasparim}. However, this terminology differs from the concept of local in the sense of germs such as in \cite{GR}.}

Our main objects of study will be classical deformations of the surfaces $Z_k$ and their moduli spaces of vector bundles. 

\begin{remark}
In this work we restrict our study to $Z_k$ for $k > 0$, in which case holomorphic bundles are algebraic (Thm.\ \ref{filtrable}). As a consequence moduli spaces of vector bundles over $Z_k$ are finite dimensional.
\end{remark}

\begin{notation}
We fix once and for all coordinate charts on $Z_k$, which we refer to as {\it canonical coordinates}, given by
\begin{equation*}
U = \mathbb C^2_{z,u} = \bigl\{(z,u)\bigr\} \qquad\text{and}\qquad
V = \mathbb C^2_{\xi,v} = \bigl\{(\xi, v)\bigr\} \text{,}
\end{equation*}
such that on $U \cap V = \mathbb C^* \times \mathbb C$ we identify
\begin{align}
\label{identification}
(\xi, v) = (z^{-1}, z^k u)\text{.}
\end{align}
\end{notation}

\begin{remark}
The cover $\{ U, V \}$ is a Leray cover and throughout we will calculate sheaf cohomology as \v Cech cohomology with respect to the cover $\{ U, V \}$.
\end{remark}

\section{Geometry and topology of $Z_k$}
\label{geometry}

\subsection{Line bundles on $Z_k$} 
We have $\H^1 (Z_k, \mathcal O_{Z_k}) = \H^2 (Z_k, \mathcal O_{Z_k}) = 0$. Thus, the exponential sheaf sequence
\[
0 \totikz \mathbb Z \totikz \mathcal O \toarg{\exp} \mathcal O^* \totikz 0
\]
 implies that $\Pic Z_k \simeq \H^1 (Z_k, \mathcal O^*) \simeq \H^2 (Z_k, \mathbb Z) \simeq \mathbb Z$, whence line bundles on $Z_k$ are determined by their first Chern class. We write $\mathcal O_{Z_k} (n)$, or simply $\mathcal O (n)$,
for the line bundle with first Chern class $n$. In canonical coordinate charts the bundle $\mathcal O(n)$
has the transition matrix $\bigl(z^{-n}\bigr)$.

\subsection{Vector bundles on $Z_k$}

Recall that a rank~$r$ bundle $E$ over a variety $X$ is called {\it filtrable} if there exists an increasing filtration $0 = E_0 \subset E_1 \subset \dotsb \subset E_{r-1} \subset E_r = E$ of subbundles such that $E_i / E_{i-1} \in \Pic X$, where $1 \leq i \leq r$. We now recall some properties of vector bundles on $Z_k$.

\begin{theorem}[{\cite[Lem.\ 3.1, Thm.\ 3.2]{gasparim1}}]
\label{filtrable}
Holomorphic vector bundles on $Z_k$ are algebraic and filtrable.
\end{theorem}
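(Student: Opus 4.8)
The plan is to reduce the statement to a normal-form problem for a single transition matrix on the overlap of the canonical charts. Since $U \cong \mathbb{C}^2$ and $V \cong \mathbb{C}^2$ are contractible Stein manifolds, every holomorphic vector bundle on them is topologically trivial and hence, by Grauert's Oka principle, holomorphically trivial; so a rank $r$ holomorphic bundle $E$ on $Z_k$ is determined up to isomorphism by a holomorphic cocycle $T \in GL\bigl(r, \mathcal{O}(U \cap V)\bigr)$, where $U \cap V = \mathbb{C}^* \times \mathbb{C}$, and two cocycles $T$, $T'$ give isomorphic bundles exactly when $T' = B\,T\,A^{-1}$ with $A \in GL\bigl(r, \mathcal{O}(U)\bigr)$ and $B \in GL\bigl(r, \mathcal{O}(V)\bigr)$. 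In these terms it suffices to produce, in the double coset of $T$, a representative that is simultaneously (i) a matrix of Laurent polynomials in $z$, which then realizes $E$ as the analytification of the algebraic bundle defined by that same cocycle over the affine variety $U \cap V$ (coordinate ring $\mathbb{C}[z,z^{-1},u]$); and (ii) triangular, whose diagonal entries are the transition functions of the successive line-bundle quotients of a filtration of $E$.

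First I would normalize $T$ along the zero section $\ell = \{u = 0\} \cong \mathbb{P}^1$. By Grothendieck's splitting theorem $E|_\ell \cong \bigoplus_{i=1}^r \mathcal{O}_{\mathbb{P}^1}(a_i)$ with $a_1 \geq \dots \geq a_r$, which is the same as a Birkhoff factorization $T(z,0) = L_-(z)\,\Delta(z)\,L_+(z)$ of the restricted cocycle on $\mathbb{C}^*$, with $\Delta = \mathrm{diag}\bigl(z^{-a_1}, \dots, z^{-a_r}\bigr)$ and $L_+$, $L_-$ holomorphic on $\mathbb{C}_z$, $\mathbb{C}_\xi$ respectively. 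Viewing $L_\pm$ as independent of the fibre coordinate and absorbing them into $A$, $B$, I may assume $T(z,u) = \Delta(z)\bigl(I + u\,M(z,u)\bigr)$ with $M$ holomorphic on $U \cap V$; all subsequent transformations will be required to equal $I$ on $\ell$, so as to preserve this normalization.

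The core step is to remove the off-diagonal part of $M$ iteratively in the powers of $u$. Suppose after some steps $T = \Delta\bigl(I + u^m M^{(m)}(z,u)\bigr)$. Applying $A = I + u^m P(z)$ holomorphic on $U$ — so $P$ has only nonnegative powers of $z$ — and $B = I + u^m Q(z)$ holomorphic on $V$ — so, because $v = z^k u$, the matrix $Q$ has only powers of $z$ that are $\leq mk$ — changes the $u^m$-coefficient of $M^{(m)}$, in its entry $(i,j)$, by the addition of $z^{a_i-a_j}Q_{ij} - P_{ij}$, i.e.\ by an arbitrary sum of a series in $z$ with powers $\geq 0$ and a series in $z$ with powers $\leq a_i - a_j + mk$. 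These two ranges cover all of $\mathbb{Z}$ unless $a_i - a_j + mk \leq -2$, which happens for only finitely many triples $(i,j,m)$, and always with $i > j$; for those the obstruction to killing the entry is exactly the nonvanishing of $H^1\bigl(\mathbb{P}^1, \mathcal{O}(a_i - a_j + mk)\bigr)$, and a bounded number of the Laurent coefficients of $M^{(m)}$ must be retained. Iterating, the limiting cocycle is $\Delta$ times $I$ plus a \emph{finite} sum of strictly lower-triangular monomial terms $c\,z^p u^m$; this is a Laurent-polynomial cocycle, giving (i), and is lower triangular, so that $0 = E_0 \subset E_1 \subset \dots \subset E_r = E$ with $E_i$ spanned by the last $i$ basis sections is a filtration of $E$ by subbundles with quotients the algebraic line bundles $E_i/E_{i-1} \cong \mathcal{O}(a_{r-i+1})$, giving (ii).

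The point I expect to be the main obstacle is analytic, not formal: one must check that the iteration converges, i.e.\ that the infinite product of the order-$m$ transformations assembles into an honest holomorphic $A$ on $U$ and an honest holomorphic $B$ on $V$. This is a standard but somewhat delicate estimate, using that $M$ is a fixed convergent series on $U \cap V$ and that at order $m$ only a controlled correction (coming from the lower-order data through the nonlinear term in $B\,T\,A^{-1}$) is involved; since moreover the surviving monomials occur at only finitely many orders $m$, the tail $u^m M^{(m)}$ genuinely vanishes in the limit. Granting this convergence, both algebraicity and filtrability fall out of the single triangular normal form and nothing more is needed. (One could alternatively first establish the formal principle for the negative curve $\ell \subset Z_k$ — whose normal bundle $\mathcal{O}(-k)$ is negative — and thereby reduce the problem to the purely order-by-order computation above, sidestepping the convergence estimate; but the transition-matrix argument is self-contained.)
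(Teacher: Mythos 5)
First, note that the paper itself does not prove this statement: it is quoted from \cite{gasparim1}, so there is no internal proof to match against, and your proposal has to be judged against the argument in that reference.

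Your overall mechanism is the right one, and it is essentially the mechanism underlying the cited proof: trivialize on the two Stein charts, reduce to a normal-form problem for a single cocycle on $U\cap V$, normalize along $\ell$ by Grothendieck/Birkhoff, and observe that the order-$m$ obstruction in the $(i,j)$ entry is governed by $H^1\bigl(\mathbb P^1,\mathcal O(a_i-a_j+mk)\bigr)$, which vanishes for large $m$ because the conormal bundle $\mathcal O(k)$ is ample; your bookkeeping of which monomials survive (strictly lower triangular, finitely many, only for $a_i-a_j+mk\le -2$) is correct and matches the explicit description of $H^1(Z_k,\mathcal O(-n))$ in Lemma \ref{tang}. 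The conclusion that the finite lower-triangular Laurent normal form yields both algebraicity and filtrability at once is also fine.

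The genuine gap is exactly the one you flag and then defer: the convergence of the infinite product of gauge transformations $\prod_m (I+u^mP_m)$ and $\prod_m(I+u^mQ_m)$ on $U$ and $V$. In the non-abelian, all-ranks-at-once normalization this is not a routine estimate, because $P_m,Q_m$ are defined recursively through the nonlinear terms of $BTA^{-1}$ and nothing in your setup bounds their growth; asserting that ``the tail $u^mM^{(m)}$ genuinely vanishes in the limit'' presupposes the convergence you are trying to prove. The proof in \cite{gasparim1} (and the way the present paper uses the result) avoids this entirely by reversing the order of the two conclusions: one first proves \emph{filtrability} by extending the maximal sub-line-bundle of $E\vert_\ell$ across infinitesimal neighbourhoods (using the vanishing of the relevant $H^1$ twists for large order), which reduces the transition matrix to triangular form with line-bundle diagonal; \emph{then} each off-diagonal entry is a single extension class in $H^1(Z_k,\mathcal O(a_i-a_j))$, and making it polynomial is an abelian \v Cech computation in which the ``infinite product'' degenerates to splitting one convergent Laurent series into its $U$-holomorphic part, its $V$-holomorphic part, and a finite polynomial remainder --- each piece converging trivially as a subseries of a convergent series. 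If you want to keep your simultaneous normal-form argument, you must either carry out the convergence estimate honestly or invoke the formal principle for the exceptional curve $\ell$ (Grauert's comparison theorem) together with an argument that bundles on $Z_k$ are determined by their restriction to a neighbourhood of $\ell$; the parenthetical remark at the end of your proposal is the right instinct, but as written it is a placeholder rather than a proof.
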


In particular, any rank~$2$ bundle on $Z_k$ is isomorphic to an algebraic extension of line bundles. 
Theorem \ref{filtrable} generalizes to the case of ample conormal bundle, see 
\cite[Thm.\ 3.2]{ballicogasparimkoppe1}.

\begin{notation}
\label{isomorphism}
Given two vector bundles $E$ and $E'$ over $Z_k$, defined by transition matrices $T$ and $T'$ respectively, a vector bundle {\it isomorphism} $E \simeq E'$ is given by a pair of invertible matrices $(A_U, A_V)$, where $A_U$ (resp.\ $A_V$) and its inverse have entries holomorphic in $U$ (resp.\ $V$) and such that $A_V T A_U = T'$. In particular, $\det A_U = \det A_V \in \mathbb C^*$.
\end{notation}

We shall make repeated use of the following standard result.
\begin{lemma}[{\cite[III.6.3.(c), III.6.7]{hartshorne}}]
\label{extensioncohomology}
There exists an isomorphism
\[
\Ext^1 (\mathcal O_{Z_k} (j), \mathcal O_{Z_k} (-j)) \simeq \H^1 (Z_k, \mathcal O_{Z_k} (-2j))\text{.}
\]
\end{lemma}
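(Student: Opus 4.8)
The plan is to reduce the $\Ext$-group to ordinary sheaf cohomology, using only that $\mathcal O_X(j)$ is locally free of rank one. The key input is the standard isomorphism, valid for any locally free sheaf $\mathcal E$ of finite rank and arbitrary $\mathcal O_X$-modules $\mathcal F, \mathcal G$,
\[
\Ext^i(\mathcal E \otimes \mathcal F, \mathcal G) \simeq \Ext^i(\mathcal F, \mathcal E^\vee \otimes \mathcal G), \qquad i \geq 0\text{,}
\]
which holds because tensoring with the locally free sheaf $\mathcal E^\vee$ is exact and sends injective $\mathcal O_X$-modules to injective ones, so it may be carried through an injective resolution of $\mathcal G$. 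Taking $\mathcal F = \mathcal O_X$, $\mathcal E = \mathcal O_X(j)$ and $\mathcal G = \mathcal O_X(-j)$ gives $\Ext^1(\mathcal O_X(j), \mathcal O_X(-j)) \simeq \Ext^1(\mathcal O_X,\, \mathcal O_X(j)^\vee \otimes \mathcal O_X(-j))$.

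Next I would identify the coefficient sheaf: $\mathcal O_X(j)^\vee \simeq \mathcal O_X(-j)$, and tensor products of line bundles add first Chern classes, so $\mathcal O_X(j)^\vee \otimes \mathcal O_X(-j) \simeq \mathcal O_X(-2j)$. Finally, $\Hom(\mathcal O_X, -) = \Gamma(X, -)$, and both $\Ext^i(\mathcal O_X, -)$ and $H^i(X, -)$ are the right derived functors of this one functor, hence coincide; in particular $\Ext^1(\mathcal O_X, \mathcal O_X(-2j)) \simeq H^1(X, \mathcal O_X(-2j))$. Chaining the three isomorphisms yields the claim.

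An essentially equivalent route, which is the one indicated by the cited references, goes through the local-to-global $\Ext$ spectral sequence $E_2^{p,q} = H^p(X, \mathcal{E}xt^q(\mathcal O_X(j), \mathcal O_X(-j))) \Rightarrow \Ext^{p+q}(\mathcal O_X(j), \mathcal O_X(-j))$. Since $\mathcal O_X(j)$ is locally free, the local $\Ext$-sheaves vanish in positive degree and $\mathcal{E}xt^0(\mathcal O_X(j), \mathcal O_X(-j)) \simeq \mathcal O_X(-2j)$, so the spectral sequence degenerates at the first page and produces the same isomorphism in every degree. There is no real obstacle in this lemma; the only point worth flagging is that the argument uses in an essential way that $\mathcal O_X(j)$ is a locally free line bundle, so that dualizing is exact — the statement should not be expected to hold with an arbitrary coherent sheaf in place of $\mathcal O_X(j)$.
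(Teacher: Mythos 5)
Your argument is correct and is precisely the route the paper intends: the lemma is stated without proof, citing Hartshorne III.6.7 (the adjunction $\Ext^i(\mathcal E\otimes\mathcal F,\mathcal G)\simeq\Ext^i(\mathcal F,\mathcal E^\vee\otimes\mathcal G)$ for $\mathcal E$ locally free of finite rank) and III.6.3(c) (the identification $\Ext^i(\mathcal O_X,-)\simeq H^i(X,-)$), which are exactly the two isomorphisms you chain together, with $\mathcal O_X(j)^\vee\otimes\mathcal O_X(-j)\simeq\mathcal O_X(-2j)$ supplying the coefficient sheaf. Nothing further is needed.
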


In terms of canonical coordinates on $Z_k$, the class of an extension defined by the transition matrix
\[
\begin{pmatrix}
z^j & p \\
0   & z^{-j}
\end{pmatrix}
\]
is sent to the cohomology class represented by the $1$-cocycle $z^{-j} p$, where $p$ and the $1$-cocycle $z^{-j} p$ are written in $U$-coordinates.

A rank~$2$ bundle with $c_1 = 0$ may thus be given by a cohomology class $\sigma \in \H^1 (Z_k, \mathcal O (-2j))$ whose general form we recall in the following lemma.

\begin{lemma}[{\cite[Lem.\ 2.6]{ballicogasparimkoppe2}}]
\label{tang}
Set $m = \big\lfloor \tfrac{n-2}k \big\rfloor$. A cohomology class in $\H^1 (Z_k, \mathcal O (-n))$ is represented by a $1$-cocycle of the general form 
\begin{align}
\label{generalcocycle}
\sigma = \sum_{i=0}^m \sum_{l=ik-n+1}^{-1} \sigma_{il}z^l u^i, \qquad \sigma_{il} \in \mathbb C.
\end{align}
In particular, we have that
\begin{align*}
\dim \H^1 (Z_k, \mathcal O (-n)) = \frac{(m+1)(2n-km-2)}2 \qquad\text{if $n \geq 2$}
\end{align*}
and zero otherwise.
\end{lemma}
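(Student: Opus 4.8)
The plan is to compute $H^1(Z_k, \mathcal O(-n))$ directly via Čech cohomology with respect to the canonical cover $\{U, V\}$, which is acyclic for coherent sheaves since $U$ and $V$ are affine and $U \cap V = \mathbb C^* \times \mathbb C$ is affine. Thus $H^1(Z_k, \mathcal O(-n))$ is the cokernel of the map $\mathcal O(U) \oplus \mathcal O(V) \to \mathcal O(U \cap V)$, where a section of $\mathcal O(-n)$ over $U\cap V$ is a Laurent expansion $\sum_{i \geq 0}\sum_{l \in \mathbb Z} a_{il} z^l u^i$ (holomorphic in $u$, Laurent in $z$), and a class is a coboundary precisely when it extends: the part with $l \geq 0$ extends over $U$, and after applying the transition rule \eqref{identification} together with the twist $z^{-n}$ for $\mathcal O(-n)$, the terms $z^l u^i$ that extend over $V$ are those with $\xi^{n-l} v^i = z^{l-n}(z^k u)^i \cdot z^{?}$ holomorphic in $(\xi,v)$, i.e. $l - n + ik \leq 0$ is the wrong sign — rather, writing the $V$-side monomial and demanding nonnegative powers of $\xi$ gives the condition $l \leq ik - n$. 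So a monomial $z^l u^i$ survives in $H^1$ iff $i \geq 0$, $l < 0$, and $l > ik - n$, i.e. $ik - n + 1 \leq l \leq -1$; this is nonempty exactly when $ik - n + 1 \leq -1$, i.e. $i \leq \tfrac{n-2}{k}$, which gives the range $0 \leq i \leq m := \lfloor \tfrac{n-2}{k}\rfloor$. This yields the normal form \eqref{generalcocycle}.

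Next I would count dimensions. For each fixed $i$ with $0 \leq i \leq m$, the index $l$ ranges over $ik - n + 1 \leq l \leq -1$, contributing $-1 - (ik - n + 1) + 1 = n - ik - 1$ monomials. Summing,
\[
\dim H^1(Z_k,\mathcal O(-n)) = \sum_{i=0}^m (n - ik - 1) = (m+1)(n-1) - k\,\frac{m(m+1)}{2} = \frac{(m+1)(2n - km - 2)}{2},
\]
which is the claimed formula. For $n \geq 2$ one has $m \geq 0$ and each summand $n - ik - 1 \geq n - mk - 1 \geq 1$ is positive, so the sum is a genuine sum of positive terms; for $n \leq 1$ there are no monomials satisfying $ik - n + 1 \leq l \leq -1$ with $i \geq 0$ (already $i = 0$ forces $-n+1 \leq l \leq -1$, empty when $n \leq 1$), so $H^1 = 0$. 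Finally I would note that the qualifier ``general'' in the statement refers to the fact that \eqref{generalcocycle} is a choice of representative modulo coboundaries: any class has such a representative, and representatives are unique, so in fact the expression is canonical once the cover and trivializations are fixed — but the weaker assertion suffices and is all that is used later.

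The only genuinely delicate point is bookkeeping the transition function correctly: one must track both the change of coordinates $(\xi,v)=(z^{-1}, z^k u)$ from \eqref{identification} and the extra factor $z^{-n}$ coming from the cocycle of $\mathcal O(-n)$ (equivalently, work with $\sigma_V = \xi^{-j}p_V$ as in the remark following Lemma~\ref{extensioncohomology} when $n = 2j$), so that the ``extends over $V$'' condition is imposed on the right monomials; a sign error here shifts the whole range. Everything else is elementary. I would present the argument by first establishing the normal form via the two extension conditions, then performing the finite sum, and finally disposing of the vanishing range $n \leq 1$ as a degenerate case of the same analysis.
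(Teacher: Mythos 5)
Your proposal is correct and follows exactly the \v Cech computation with respect to the two-chart cover that the paper uses for the analogous statements (the paper itself only cites Lemma~\ref{tang} from \cite{ballicogasparimkoppe2}, but carries out the identical argument in the proof of Lemma~\ref{H1tangent}): discard monomials $z^l u^i$ with $l\ge 0$ as $U$-coboundaries, observe that $z^{-n}\xi^a v^b = z^{kb-n-a}u^b$ gives the $V$-coboundaries $l\le ik-n$, and sum $n-ik-1$ over $0\le i\le m$. Your self-corrected transition-function bookkeeping lands on the right condition, and the dimension count and the vanishing for $n\le 1$ are both right.
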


\begin{definition}[\cite{ballico}]
\label{splittingtype}
Let $E$ be a rank~$r$ bundle on $Z_k$. Then the restriction of $E$ to the zero section $\ell \simeq \mathbb P^1$ is a rank~$r$ bundle on $\mathbb P^1$, which by Grothendieck's splitting theorem splits as a direct sum of line bundles. That is, $E \vert_\ell \simeq \mathcal O_{\mathbb P^1} (j_1) \oplus \dotsb \oplus \mathcal O_{\mathbb P^1} (j_r)$. We call $(j_1, \dotsc, j_r)$ the {\it splitting type} of $E$. When $E$ is a rank~$2$ bundle with first Chern class $c_1 (E) = j_1 + j_2 = 0$, we set $j = \vert j_1 \vert = \vert j_2 \vert$ so that $j \geq 0$ and say that $E$ is of {\it splitting type} $j$.
\end{definition}

Expressing Theorem~\ref{filtrable} in canonical coordinates gives that a rank~$2$ bundle $E$ with first Chern class $c_1 (E) = 0$ and splitting type $j$ can be defined via a transition matrix
\cite[Thm.\ 3.3]{gasparim1}
\begin{equation*}
\begin{pmatrix}
z^j &   p    \\
 0  & z^{-j}
\end{pmatrix}
=
\begin{pmatrix}
z^j & z^j \sigma \\
 0  &   z^{-j}
\end{pmatrix}
\end{equation*}
where $\sigma$ has the general form
\begin{align*}
\sigma = \sum_{i=1}^{\left\lfloor\!\frac{2j - 2}{k}\!\right\rfloor} \sum_{l = ik-2j+1}^{-1} \sigma_{il} z^l u^i, \qquad \sigma_{il} \in \mathbb C
\end{align*}
and in contrast to (\ref{generalcocycle}) the first sum now runs from $i = 1$, so that $\sigma \vert_\ell = 0$ and $E \vert_\ell = \mathcal O (j) \oplus \mathcal O (-j)$.

\subsection{Moduli} Moduli spaces of rank~$2$ bundles on $Z_k$ were studied in 
\cite[Thm.\ 3.5]{gasparim2} for the case of $Z_1$ and in \cite[Thm.\ 4.11]{ballicogasparimkoppe2} 
for the cases of $k \geq 1$. One could either give an ad-hoc definition of stability \cite[Def.\ 5.2]{GKM} and obtain quasi-projective varieties corresponding to moduli spaces of {\it framed-stable} bundles, or else take the point of view of stacks and study the full moduli stack of bundles on $Z_k$. The latter approach was taken in \cite{benbassatgasparim} where two equivalent presentations of the stack of bundles on $Z_k$ were given \cite[Thm.\ 3.1]{benbassatgasparim}. The former approach of choosing a definition of stability allows us to describe moduli spaces of framed-stable bundles with local second Chern class $j$ on $Z_k$, which turn out to be smooth quasi-projective varieties of dimension ${2j-k-2}$ \cite[Thm.\ 4.11]{ballicogasparimkoppe2}.

\section{Classical deformations}
\label{classicaldeformations}

Classical deformations of complex structures on a complex manifold $X$ are parametrized by $\H^1 (X, \mathcal T_X)$, where $\mathcal T_X$ is the (holomorphic) tangent bundle, with obstructions to deformation lying in $\H^2 (X, \mathcal T_X)$.

Although the general existence results in the deformation theory of Kodaira and Spencer \cite{kodaira} only hold for compact manifolds, much of the theory still applies to families of noncompact manifolds. We construct a semiuniversal family for the surfaces $Z_k$ explicitly by considering deformations of the vector bundle structure on $Z_k$ to an affine bundle structure.

\subsection{Deformations as affine bundles}

The vector bundle structure of $Z_k = \Tot \mathcal O_{\mathbb P^1} (-k)$ may be deformed to the structure of a (holomorphic) affine bundle over $\mathbb P^1$.

\begin{theorem}
\label{affinebundle}
Any deformation of $Z_k$ as affine bundle over $\mathbb P^1$ is given (in canonical coordinates) by an affine bundle structure of the form
\[
u \mapstotikz z^k u + \textstyle\sum\limits_{i=1}^{k-1} t_i z^i.
\]
\end{theorem}

\begin{proof}
As a vector bundle over $\mathbb P^1$, the transition function of $\mathcal O_{\mathbb P^1} (-k)$ is given in coordinates by $u \mapstotikz z^k u$, where $u$ is the fibre coordinate. An affine bundle structure is given by $T \colon u \mapstotikz z^k u + t (z)$, where $t (z)$ is a holomorphic function on $\mathbb C^*$.

The general form of $t (z)$ is
\[
t (z) = \sum_{i=-\infty}^\infty t_i z^i, \qquad b_i \in \mathbb C.
\]
An isomorphism of affine bundles $T \totikz A_V T A_U$ reducing $t (z)$ to the form $\sum_{i=1}^{k-1} t_i z^i$ is given in charts by
\begin{alignat*}{3}
A_U \colon u &\mapstotikz u - \sum_{i=0}^\infty t_{k+i} z^i  &&\qquad \text{on $U$} \\
A_V \colon v &\mapstotikz v - \sum_{i=0}^\infty t_{-i} \xi^i    &&\qquad \text{on $V$.}
\end{alignat*}
Clearly the choice of affine transformations on $U$ and $V$ are holomorphic with holomorphic inverse in the respective coordinates.
\end{proof}

\subsection{Deformations of the complex structure}

In this section we write the deformations of Theorem \ref{affinebundle} as a family of complex manifolds. To relate the deformations as affine bundle to deformations of complex structure of the total space, we first describe the (holomorphic) tangent bundle of $Z_k$ and calculate its first cohomology group $\H^1 (Z_k, \mathcal T_{Z_k})$.

In canonical coordinates the transition matrix for the tangent bundle of $Z_k$ is given by the Jacobian matrix
\begin{align}
\label{jacobian}
J =
\begin{pmatrix}
\tfrac{\partial}{\partial z} \, z^{-1} & \tfrac{\partial}{\partial u} \, z^{-1} \\[.75ex]
\tfrac{\partial}{\partial z} \, z^k u  & \tfrac{\partial}{\partial u} \, z^k u
\end{pmatrix}
=
\begin{pmatrix}
  -z^{-2}   &  0  \\
k z^{k-1} u & z^k
\end{pmatrix}
\end{align}
which shows that $\mathcal T_{Z_k}$ fits into a short exact sequence
\begin{equation*}
0 \rightarrowtikz \mathcal O (-k) \rightarrowtikz \mathcal T_{Z_k} \rightarrowtikz \mathcal O (2) \rightarrowtikz 0 \text{.}
\end{equation*}

\begin{lemma}
\label{H1tangent}
\begin{enumerate}
\item $\H^1 (Z_1, \mathcal T_{Z_1}) = 0$.
\item Let $k \geq 2$. Then $\H^1 (Z_k, \mathcal T_{Z_k}) \simeq \mathbb C^{k-1}$ and in $U$-coordinates a cohomology class $\tau \in \H^1 (Z_k, \mathcal T_{Z_k})$ is represented by a $1$-cocycle of the general form
\[
\sum_{i=1}^{k-1} t_i z^{-k+i} \frac{\partial}{\partial u}.
\]
\end{enumerate}
\end{lemma}

\begin{proof}
Let $\sigma \in \mathcal T_{Z_k} (U \cap V)$ be a general $1$-cocycle, written in $U$-coordinates. In the basis $\big\{ \frac{\partial}{\partial z}, \frac{\partial}{\partial u} \big\}$, $\sigma$ may be written as a convergent power series
\begin{align*}
\sigma = \sum_{i=0}^\infty \sum_{j=-\infty}^\infty
\begin{pmatrix}
a_{ij} \\
b_{ij} 
\end{pmatrix} z^j u^i,
\qquad a_{ij}, b_{ij} \in \mathbb C.
\end{align*}
Since terms with positive powers of $z$ are holomorphic on $U$ we obtain the cohomological equivalence
\[
\sigma \sim \sum_{i=0}^\infty \sum_{j=-\infty}^{-1}
\begin{pmatrix}
a_{ij} \\
b_{ij} 
\end{pmatrix} z^j u^i \text{.}
\]
Changing charts we have
\begin{align*}
J \sigma &\sim 
\begin{pmatrix}
     -z^{-2}   &  0  \\
k \, z^{k-1} u & z^k
\end{pmatrix}
\sum_{i=0}^\infty \sum_{j=-\infty}^{-1}
\begin{pmatrix}
a_{ij} \\
b_{ij}
\end{pmatrix}
z^j u^i \\
&= \sum_{i=0}^\infty \sum_{j=-\infty}^{-1}
\begin{pmatrix}
-z^{-2} \, a_{ij} \\
k \, z^{k-1} u \, a_{ij} + z^k \, b_{ij} 
\end{pmatrix}
z^j u^i
\end{align*}
and since monomials of the form $z^m u^n$ with $m \leq nk$ are holomorphic on $V$, we obtain
the cohomological equivalence
\[
J \sigma \sim \sum_{j=-k+1}^{-1}
\begin{pmatrix}  
0 \\
b_{0j}
\end{pmatrix}
z^{k+j}.
\]
On the $U$ chart, the nontrivial terms appearing in the expression of $\sigma$ are thus $z^{-k+i} \frac{\partial}{\partial u}$ for $1 \leq i \leq k-1$.
\end{proof}

The deformations constructed in Theorem \ref{affinebundle} may be given as a family of noncompact manifolds.

\begin{theorem}\label{family}
Let $k \geq 2$. Then $Z_k$ admits a $(k{-}1)$-dimensional semiuniversal family $Z_k \totikz M \toarg{\pi} \mathbb C^{k-1} \simeq \H^1 (Z_k, \mathcal T_{Z_k})$ of deformations.
\end{theorem}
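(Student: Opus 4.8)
The plan is to assemble the pieces already developed in this section into a single statement about a semiuniversal family. First I would define the total space $M$ explicitly as a complex manifold fibered over the parameter space $B := \mathbb C^{k-1}$ with coordinates $(t_1,\dotsc,t_{k-1})$. Concretely, $M$ is obtained by glueing $U \times B = \{(z,u,t_1,\dotsc,t_{k-1})\}$ and $V \times B = \{(\xi,v,t_1,\dotsc,t_{k-1})\}$ along $(\mathbb C^* \times \mathbb C) \times B$ via
\[
(\xi, v, t_1, \dotsc, t_{k-1}) = \Bigl(z^{-1},\; z^k u + \textstyle\sum_{i=1}^{k-1} t_i z^i,\; t_1, \dotsc, t_{k-1}\Bigr),
\]
with $\pi \colon M \to B$ the projection to the $t$-coordinates. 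That this glueing is biholomorphic on overlaps is exactly the computation carried out above showing that $T_{\mathcal Z_k(\tau)}$ is invertible for $z \ne 0$; here it is done uniformly in the parameters, which is immediate since the parameters enter polynomially. By construction the fiber $\pi^{-1}(t)$ is the surface $\mathcal Z_k(\tau)$ with $\tau = \sum t_i z^i$, and $\pi^{-1}(0) = \mathcal Z_k(0) = Z_k$, so $M \to B$ is a deformation of $Z_k$ in the sense of Kodaira--Spencer. Flatness is automatic since $\pi$ is a submersion onto a smooth base.

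Next I would identify the Kodaira--Spencer map. Since $M$ is glued from two charts by adding the cocycle $(0, z^{-k}\tau)^t$ to the gluing data (as spelled out just before the theorem), the infinitesimal variation of complex structure in the direction $\partial/\partial t_i$ at $t = 0$ is represented by the \v Cech $1$-cocycle $(0, z^{-k+i})^t \in \mathcal T_{Z_k}(U \cap V)$, i.e.\ exactly the class $\sigma_i$ of Lemma~\ref{H1tangent}. Hence the Kodaira--Spencer map $\rho \colon T_0 B \to H^1(Z_k, \mathcal T_{Z_k})$ sends $\partial/\partial t_i \mapsto [\sigma_i]$, and since $\{[\sigma_1], \dotsc, [\sigma_{k-1}]\}$ is the basis of $H^1(Z_k,\mathcal T_{Z_k}) \simeq \mathbb C^{k-1}$ produced in Lemma~\ref{H1tangent}, the map $\rho$ is an isomorphism. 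This gives the labelled arrow $\pi$ in the statement and shows the family is \emph{infinitesimally versal} (often called \emph{effectively parametrized}).

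It then remains to upgrade infinitesimal versality to semiuniversality, i.e.\ to show that any deformation of $Z_k$ over a (germ of a) base $S$ is the pullback of $M \to B$ along some morphism $S \to B$, unique on tangent spaces. The standard route is: since $Z_k$ is covered by two Stein charts, all higher cohomology of coherent sheaves vanishes, in particular $H^2(Z_k, \mathcal T_{Z_k}) = 0$, so deformations are unobstructed and the base of a semiuniversal deformation is smooth of dimension $\dim H^1(Z_k,\mathcal T_{Z_k}) = k-1$ — matching $\dim B$. One then invokes the general principle (valid here because the relevant obstruction and the completeness obstruction both live in $H^2(Z_k,\mathcal T_{Z_k}) = 0$) that a deformation with bijective Kodaira--Spencer map over a smooth base of the correct dimension is automatically semiuniversal; I would cite \cite{kodaira} for the formal/analytic completeness theorem, noting as the paper's \S\ref{classicaldeformations} does that although Kodaira--Spencer's existence theorem needs compactness, the completeness and uniqueness-of-Kodaira--Spencer parts of the theory apply verbatim to families of noncompact manifolds. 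The main obstacle is precisely this last point: making the completeness argument rigorous in the noncompact setting. I would handle it by giving an explicit lifting argument — given a deformation of $Z_k$ over an Artinian base $A$, one shows order by order that its gluing cocycle is cohomologous to one of the form $(0, z^{-k}\tau_A)^t$ with $\tau_A = \sum t_i(A) z^i$ (using that every $1$-cocycle in $\mathcal T_{Z_k}$ is cohomologous to such a normal form, which is the content of Lemma~\ref{H1tangent}'s proof, and that coboundaries can be absorbed into coordinate changes as in the $\phi$-argument above), thereby producing the classifying map $A \to B$ directly; passing to the limit over all Artinian quotients gives the formal, hence (by Stein-ness and standard convergence) analytic, statement. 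This explicit normal-form argument sidesteps any appeal to a general noncompact deformation theory.
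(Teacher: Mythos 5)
Your construction of the total space $M$, the identification of the fibres with $\mathcal Z_k(\tau)$, and the computation of the Kodaira--Spencer map $\partial/\partial t_i \mapsto (0, z^{-k+i})^t$ coincide with the paper's proof. Where you genuinely diverge is in establishing versality. The paper forward-references Theorem~\ref{affine}: every nontrivial fibre $\mathcal Z_k(\tau)$ is affine, hence $H^1(\mathcal Z_k(\tau), \mathcal T_{\mathcal Z_k(\tau)}) = 0$ and those fibres are rigid, so the family ``trivially contains all infinitesimal deformations'' of every fibre; combined with the bijective Kodaira--Spencer map at $0$ this gives versality with almost no further work. You instead propose an explicit order-by-order normal-form argument over Artinian bases, reducing an arbitrary gluing cocycle to $(0, z^{-k}\tau_A)^t$ via the coboundary computation underlying Lemma~\ref{H1tangent}, and then passing from the formal to the analytic statement. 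Your route is more self-contained -- it avoids the somewhat awkward logical dependence on Theorem~\ref{affine}, which is proved later and itself rests on Theorem~\ref{decomposable} and Lemma~\ref{h1deformed} -- and it confronts head-on the issue (which the paper only gestures at) that Kodaira--Spencer completeness is stated for compact manifolds. The cost is that your completeness step is only a sketch: the reduction to a two-chart gluing cocycle (legitimate here because the charts are Stein, so deformations over Artinian bases trivialize chartwise) and the formal-to-convergent passage would both need to be written out, whereas the paper's rigidity argument disposes of all nonzero fibres in one line. Both arguments are at a comparable level of rigour; yours is arguably the more honest treatment of the noncompactness issue.
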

\begin{proof}
Let $B = \mathbb C^{k-1}$ with coordinates $t_1, \dotsc, t_{k-1}$ and consider the complex manifold $M$ given by the charts
\[
U \times B = \{(z, u, t_1, \dotsc, t_{k-1})\} \qquad V \times B = \{(\xi, v, t_1, \dotsc, t_{k-1})\},
\]
with transition matrix
\[
\begin{pmatrix}
       z^{-2}    &  0  & 0 \\
z^{-1} \big( \textstyle\sum_{i=1}^{k-1} t_i z^i \big) & z^k & 0 \\
         0       &  0  & I_{k-1}
\end{pmatrix} \text{.}
\]
Then the projection $\pi \colon M \totikz B$ defines a family of noncompact manifolds with $M_0 = \pi^{-1} (0) \simeq Z_k$ and the fibre $M_t = \pi^{-1} (t)$ for $t = (t_1, \dotsc, t_{k-1}) \in \mathbb C^{k-1}$ is isomorphic to the total space of the affine bundle with affine structure $u \mapstotikz z^k + \sum_{i=1}^{k-1} t_i z^i$.

Recall from \cite[Def.\ 1.35]{manetti} that a family is {\it semiuniversal}, if it is {\it versal} (or {\it complete}) and the Kodaira--Spencer map $\operatorname{KS} \colon T_0 B \totikz \H^1 (Z_k, \mathcal T_{Z_k})$ is an isomorphism. The Kodaira--Spencer map of the family $\pi \colon M \totikz B$ is the vector space isomorphism
\begin{align}
\label{kodairaspencer}
\begin{tikzpicture}[baseline=-2.6pt,description/.style={fill=white,inner sep=2pt}]
\matrix (m) [matrix of math nodes, row sep=.7em, text height=1.5ex, column sep=0em, text depth=0.25ex, ampersand replacement=\&, column 3/.style={anchor=base west}, column 1/.style={anchor=base east}]
{\operatorname{KS} \colon \&[-.8em] T_0 B \&[2em] \H^1 (Z_k, \mathcal T_{Z_k}) \\
\& \displaystyle\frac{\partial}{\partial t_i} \& \displaystyle z^{-k+i} \frac{\partial}{\partial u} \text{.} \\};
\path[->,line width=.45pt,font=\scriptsize]
(m-1-2) edge (m-1-3)
;
\path[|->,line width=.45pt]
(m-2-2) edge (m-2-3)
;
\end{tikzpicture}
\end{align}
Identifying $B = \mathbb C^{k-1}$ with its tangent space at $0$ and using (\ref{kodairaspencer}) we denote a fibre $M_t$ for $t = (t_1, \dotsc, t_{k-1}) \in \mathbb C^{k-1}$ by $Z_k (\tau)$ where $\tau \in \H^1 (Z_k, \mathcal T_{Z_k})$ is the class represented by the cocycle $ \sum_{i=1}^{k-1} t_i z^{-k+i} \frac{\partial}{\partial u}$. Theorem \ref{affine} shows that $Z_k (\tau)$ is an affine algebraic variety for each $\tau \neq 0$, whence $\H^1 (Z_k (\tau), \mathcal T_{Z_k (\tau)}) = 0$. Thus $Z_k (\tau)$ admits no infinitesimal deformations. For each fibre $M_t$ the family $Z_k \totikz M \toarg{\pi} \mathbb C^{k-1}$ thus trivially contains all infinitesimal deformations of $M_t$ and is therefore versal.
\end{proof}

\begin{notation}
We fix {\it canonical coordinates} for the noncompact surfaces $Z_k (\tau)$, where
$\tau \in \H^1 (Z_k, \mathcal T_{Z_k})$ is the cohomology class of the $1$-cocycle $\sum_{i=1}^{k-1} t_i z^{-k+i} \frac{\partial}{\partial u}$. Let $Z_k (\tau) = U \cup V $ with coordinates $U = \{ (z,u) \}$, $V = \{ (\xi, v) \}$, 
such that on $U \cap V \simeq \mathbb C^* \times \mathbb C$ we identify
\begin{equation}
\label{glue}
\boxed{(\xi, v) = \Big( z^{-1}, z^k u + \textstyle\sum\limits_{i=1}^{k-1} t_i z^i \Big)\text{.}}
\end{equation}
$Z_k (\tau)$ is the total space of the affine line bundle given by the transition function $u \mapstotikz z^k u + \sum_{i=1}^{k-1} t_i z^i$.
\end{notation}

In this paper we only consider deformations of $Z_k$ of the form $Z_k (\tau)$. For $\tau \neq 0$, the complex structure of $Z_k (\tau)$ is different from the complex structure on $Z_k$ as we will see when we show that $Z_k (\tau)$ contains no compact complex analytic curves (Thm.\ \ref{nocompactcurves}), that every holomorphic vector bundle on $Z_k (\tau)$ splits as a direct sum of line bundles (Thm.\ \ref{decomposable}), and that $Z_k (\tau)$ admits the structure of a (smooth) affine complex algebraic variety (Thm.\ \ref{affine}).

\begin{remark}
Another way of arriving at the family of Theorem \ref{family} is to consider deformations of the tangent bundle $\mathcal T_{Z_k}$ as a vector bundle over $Z_k$ and ask which deformations could be the tangent bundle of a different complex manifold. Deformations of the tangent bundle that ``integrate'' to a different complex structure on $Z_k$ give rise precisely to the deformations $Z_k (\tau)$.
\end{remark}

\begin{remark}
For $k = 2$ the family given in Theorem~\ref{family} is already well known: it is the simultaneous resolution of the $A_1$ surface singularity (the rational double point) of Atiyah \cite{atiyah}. The deformations of $Z_2$ may also be obtained via the methods of Kaledin--Verbitsky \cite{kaledinverbitsky} who present a Torelli-type theorem for noncompact manifolds with a holomorphic symplectic form. However, of the surfaces $Z_k$, only $Z_2 \simeq T^* \mathbb P^1$ admits a holomorphic symplectic form.
\end{remark}

\subsection{Relation to deformations of Hirzebruch surfaces}
\label{relationtohirzebruch}

Recall from \cite{manetti} that the Hirzebruch surface $F_k$ is isomorphic to the subvariety of $\mathbb P^1 \times \mathbb P^{k+1}$ given as the set of points $([z_0 : z_1], [x_0 : \dotsc : x_{k+1}])$ satisfying the equation
\[
z_0 (x_1, \dotsc, x_k) = z_1 (x_2, \dotsc, x_{k+1}).
\]
The embedding $Z_k \hookrightarrowtikz F_k$ may be given by
\begin{equation}
\begin{aligned}
\label{embedding}
  (z, u) &\mapstotikz (  [1 : z], [1 : \eqmakebox[emb1]{$z^k u$} : \eqmakebox[emb2]{$z^{k-1} u$} : \ldots : \eqmakebox[emb3]{$u$}]) \\
(\xi, v) &\mapstotikz ([\xi : 1], [1 : \eqmakebox[emb1]{$v$}     : \eqmakebox[emb2]{$\xi \, v$}  : \ldots : \eqmakebox[emb3]{$\xi^k v$}])\text{.}
\end{aligned}
\end{equation}

Since $\H^2 (F_k, \mathcal T_{F_k}) = 0$, the Theorem of Existence \cite[Thm.\ 5.6]{kodaira} gives a semiuniversal family over a $(k{-}1)$-dimensional base, where $\dim \H^1 (F_k, \mathcal T_{F_k}) = k-1$. This family is given explicitly in \cite[\S 2.3]{manetti} as follows.

The family is given as $F_k \totikz \wtilde M \toarg{\tilde\pi} \mathbb C^{k-1}$, where $\wtilde M \subset \mathbb P^1 \times \mathbb P^{k+1} \times \mathbb C^{k-1}$ is the set of coordinates $([z_0 : z_1], [x_0 : \ldots : x_{k+1}], (t_1, \dotsc, t_{k-1}))$ satisfying the equation
\begin{align}
\label{familyhirzebruch}
z_0 (x_1, \dotsc, x_k) = z_1 (x_2 + t_1 x_0, \dotsc, x_k + t_{k-1} x_0, x_{k+1})
\end{align}
and $\tilde \pi = \pi_3 \vert_{\widetilde M}$ is the restriction of the projection to the third factor.

\begin{proposition}
Let $Z_k \totikz M \totikz \mathbb C^{k-1}$ be the family of Theorem \ref{family} and let $F_k \totikz \wtilde M \totikz \mathbb C^{k-1}$ be the family (\ref{familyhirzebruch}).

There is a commutative diagram
\begin{align*}
\begin{tikzpicture}
\matrix (m) [matrix of math nodes, inner sep=3.5pt, row sep=2.5em, text height=1.7ex, column sep=3em, text depth=0.25ex, ampersand replacement=\&, column 3/.style={anchor=base west}]
{
Z_k \&            M \& \mathbb C^{k-1}\phantom{.} \\
F_k \& \wtilde M \& \mathbb C^{k-1}. \\
};
\path[right hook->,line width=.45pt,font=\scriptsize]
(m-1-1) edge (m-2-1)
(m-1-2) edge (m-2-2)
;
\path[->,line width=.45pt,font=\scriptsize]
(m-1-1) edge (m-1-2)
(m-2-1) edge (m-2-2)
(m-1-2) edge (m-1-3)
(m-2-2) edge (m-2-3)
;
\draw [double equal sign distance,line width=.45pt]
(m-1-3) to (m-2-3)
;
\end{tikzpicture}
\end{align*}
\end{proposition}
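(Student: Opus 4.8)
The plan is to upgrade the chartwise embedding \eqref{embedding} of $Z_k$ into $F_k$ to an embedding $\Phi \colon M \hookrightarrow \widetilde M$ which is the identity on the base $\mathbb C^{k-1}$, and then to read the commutativity of the two squares (and hence of the two triangles) off the construction. Concretely, I would use the charts $U \times B$ and $V \times B$ of $M$ from Theorem \ref{family} and map into the affine charts of $\mathbb P^1 \times \mathbb P^{k+1} \times \mathbb C^{k-1}$, where the target sits inside $\widetilde M$ via \eqref{familyhirzebruch} (correcting the evident misprint $x_{q+1}$ for $x_{k+1}$ in the defining equation of $F_k$). On $U \times B$ I would put $[z_0 : z_1] = [1 : z]$, normalise $x_0 = 1$, set $x_{k+1} = u$, and let $x_k, \dots, x_1$ be determined by the relations \eqref{familyhirzebruch} forces when $z_0 = 1, z_1 = z$, namely $x_k = zu$ and $x_j = z(x_{j+1} + t_j)$ for $1 \le j \le k-1$; on $V \times B$ I would put $[z_0 : z_1] = [\xi : 1]$, $x_0 = 1$, $x_1 = v$, and let $x_2, \dots, x_{k+1}$ be determined by the relations forced when $z_0 = \xi, z_1 = 1$, namely $x_{j+1} = \xi x_j - t_j$ for $1 \le j \le k-1$ and $x_{k+1} = \xi x_k$. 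By construction each chart maps into $\widetilde M$, and setting $t_1 = \dots = t_{k-1} = 0$ recovers \eqref{embedding} on each chart.

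The one genuine computation is that these two chart descriptions glue along $U \cap V$, where $(\xi, v) = (z^{-1}, z^k u + \tau)$ with $\tau = \sum_{i=1}^{k-1} t_i z^i$. Here I would first note that solving the downward $U$-recursion yields $x_1 = z^k x_{k+1} + \sum_{i=1}^{k-1} z^i t_i = z^k u + \tau$, which is exactly $v$; and that both the $U$-relations and the $V$-relations, after substituting $\xi = z^{-1}$, become the same recursion $x_{j+1} = z^{-1} x_j - t_j$ for $1 \le j \le k-1$ together with $x_{k+1} = z^{-1} x_k$. Hence, starting from the common data $[z_0 : z_1] = [1 : z]$, $x_0 = 1$, $x_1 = v$, the two constructions produce the same homogeneous coordinate vector $[x_0 : \dots : x_{k+1}]$, so $\Phi$ is a well-defined holomorphic map $M \to \widetilde M$. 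This is the only place where the precise shape $\tau = \sum_{i=1}^{k-1} t_i z^i$ is used.

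It remains to see that $\Phi$ is an embedding and that the diagram commutes. Commutativity is immediate: $\Phi$ acts as the identity on $(t_1, \dots, t_{k-1})$, so it is compatible with the projections of $M$ and $\widetilde M$ to $\mathbb C^{k-1}$ (the right square and the identification of bases), and by the observation above its restriction to the central fibre is exactly \eqref{embedding} (the left square), which then forces the two triangles to commute. For the embedding claim I would identify the image: every $t$-dependent term of \eqref{familyhirzebruch} carries a factor $x_0$, so the divisor $\{x_0 = 0\} \subset \widetilde M$ is cut out by the $t$-independent equations and is therefore $\bigl(\{x_0 = 0\} \cap F_k\bigr) \times \mathbb C^{k-1}$, a trivial family of sections at infinity; on its complement $\{x_0 \ne 0\}$ one normalises $x_0 = 1$, and then $z_1/z_0$ together with $x_{k+1}$ (resp.\ $x_1$) recovers $(z, u)$ (resp.\ $(\xi, v)$) and provides a holomorphic inverse to $\Phi$. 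Thus $\Phi$ is a biholomorphism of $M$ onto the open submanifold $\widetilde M \setminus \{x_0 = 0\}$, in particular an embedding, and the required commutative diagram exists.

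The main obstacle is purely the index and coordinate bookkeeping in the gluing step — lining up the conventions of \eqref{familyhirzebruch} with those of the transition matrix of $M$ in Theorem \ref{family}; once the two recursions are matched everything is formal. As a byproduct one sees that $M \hookrightarrow \widetilde M$ exhibits the family of local surfaces $\mathcal Z_k(\tau)$ as the complement of a trivial family of rational curves inside the semiuniversal family of Hirzebruch surfaces $F_k$.
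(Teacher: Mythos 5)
Your proposal is correct and follows essentially the same route as the paper: an explicit chartwise embedding $M \hookrightarrow \widetilde M$ over the identity on $\mathbb C^{k-1}$, with the gluing on $U \cap V$ checked via $(\xi, v) = (z^{-1}, z^k u + \tau)$ and the commutativity of both squares read off from the construction. Your recursive derivation gives $x_{n+1} = z^{k-n} u + \sum_{i=n+1}^{k-1} t_i z^{i-n}$, which is consistent with the $V$-chart formula $y_{n+1} = \xi^n v - \sum_{i=1}^{n} t_i \xi^{n-i}$ (and in fact corrects an index slip in the paper's displayed $U$-chart formula), while your identification of the image as $\widetilde M \setminus \{x_0 = 0\}$ is a harmless strengthening of the paper's injectivity check.
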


\begin{proof}
Recall from the proof of Theorem \ref{family} that the family $M$ is covered by the sets $U \times B$ and $V \times B$, where $U, V$ are the canonical charts and $B \simeq \mathbb C^{k-1}$, such that on $(U \times B) \cap (V \times B)$ we identify
\begin{align}
\label{identify}
(\xi, v, t_1, \dotsc, t_{k-1}) = \Big( z^{-1}, z^k u + \textstyle\sum\limits_{i=1}^{k-1} t_i z^i, t_1, \dotsc, t_{k-1} \Big).
\end{align}
Define the map $M \hookrightarrowtikz \wtilde M$ on charts $U \times B$ and $V \times B$ by
\begin{equation}
\begin{aligned}
\label{embeddingfamily}
(z,   u, t_1, \dotsc, t_{k-1}) &\mapstotikz ([ 1  : z], [1 : \eqmakebox[embf1]{$x_1$} : \ldots : \eqmakebox[embf2]{$x_{k+1}$}], t_1, \dotsc, t_{k-1}) \\
(\xi, v, t_1, \dotsc, t_{k-1}) &\mapstotikz ([\xi : 1], [1 : \eqmakebox[embf1]{$y_1$} : \ldots : \eqmakebox[embf2]{$y_{k+1}$}], t_1, \dotsc, t_{k-1}),
\end{aligned}
\end{equation}
where
\begin{alignat*}{5}
x_{n+1} &={} &z^{k-n} u + \eqmakebox[sum]{$\textstyle\sum\limits_{i=n+1}^{k-1}$} t_i &z^{i-n} \\
y_{n+1} &={} &\xi^n   v - \eqmakebox[sum]{$\textstyle\sum\limits_{i=1}^n$}       t_i &\xi^{n-i}
\end{alignat*}
for $0 \leq n \leq k$. (The sum is empty for $x_k$, $x_{k+1}$ and $y_1$.)

One checks that the map (\ref{embeddingfamily}) is injective and satisfies (\ref{familyhirzebruch}) on each chart; moreover, the map is well defined on the intersection, which follows from using the identity (\ref{identify}).

The commutativity of the first square follows from (\ref{embedding}) for $t_1 = \dotsb = t_{k-1} = 0$. The commutativity of the second square is evident from (\ref{embeddingfamily}).
\end{proof}

\begin{remark}[(An application to mirror symmetry)]
\label{mirrorsymmetry}
A recent result \cite{GGS} shows that adjoint orbits of semisimple Lie groups have the structure of symplectic Lefschetz fibrations. Such Lefschetz fibrations are considered in the homological mirror symmetry conjecture, where one needs to identify details of its complex and symplectic structures. The adjoint orbit of $\operatorname{\mathfrak{sl}} (2, \mathbb C)$ is isomorphic to a nontrivial deformation $Z_2 (\tau)$ of the noncompact surface $Z_2$. By Theorem \ref{family}, the deformation $Z_2 (\tau)$ is unique up to scaling. Using Lie theoretical methods, \cite{BBGGS} considered a Landau--Ginzburg model over $Z_2 (\tau)$, proved it does not admit any projective mirror, and identified its mirror category as a proper subcategory of coherent sheaves on the second Hirzebruch surface. For $k \neq 2$ the role played by the deformations $Z_k (\tau)$ in mirror symmetry is not yet known. However, we expect deformations of other noncompact manifolds to play an important role in mirror symmetry.
\end{remark}

\section{Geometry and topology of $Z_k (\tau)$}
\label{geometrydeformations}

\subsection{Line bundles on $Z_k (\tau)$}

\begin{lemma}
$\H^1 (Z_k (\tau), \mathcal O) = \H^2 (Z_k (\tau), \mathcal O) = 0$.
\end{lemma}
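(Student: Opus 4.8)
The plan is to compute the two cohomology groups directly via the \v Cech complex for the two-chart cover $\{U, V\}$ of $\mathcal Z_k(\tau)$, exactly as was done for $Z_k$ and for the tangent bundle in Lemma \ref{H1tangent}. Since $U$ and $V$ are each isomorphic to $\mathbb C^2$, they are Stein (in fact affine), so $H^i(U, \mathcal O) = H^i(V, \mathcal O) = 0$ for $i > 0$, and likewise for $U \cap V \simeq \mathbb C^* \times \mathbb C$ all higher cohomology vanishes. Hence the \v Cech complex for this cover computes the sheaf cohomology, and since the cover has only two sets, $H^i(\mathcal Z_k(\tau), \mathcal O) = 0$ automatically for $i \geq 2$, which disposes of the $H^2$ claim immediately.

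For $H^1$, I would write a general section $g \in \mathcal O(U \cap V)$ as a Laurent series $g = \sum_{i \geq 0} \sum_{l \in \mathbb Z} g_{il} z^l u^i$ (holomorphic in $u$, Laurent in $z$, with the usual convergence on $\mathbb C^* \times \mathbb C$). The coboundary map sends $(f_U, f_V) \in \mathcal O(U) \oplus \mathcal O(V)$ to $f_V|_{U \cap V} - f_U|_{U \cap V}$, so $H^1 = \mathcal O(U \cap V) / \bigl(\mathcal O(U)|_{U\cap V} + \mathcal O(V)|_{U \cap V}\bigr)$. Functions holomorphic on $U$ are exactly the power series $\sum_{i,l \geq 0} a_{il} z^l u^i$, so modulo these we may assume $g = \sum_{i \geq 0}\sum_{l \leq -1} g_{il} z^l u^i$. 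The point is then to show that every such $g$ also lies in the image of $\mathcal O(V)$. Using the gluing (\ref{glue}), $u = \xi^k v - \tau(\xi)$ where $\tau(\xi) = \sum_{i=1}^{k-1} t_i \xi^{k-i}$, and $z = \xi^{-1}$; substituting, each monomial $z^l u^i$ with $l \leq -1$ becomes $\xi^{-l}(\xi^k v - \tau(\xi))^i$, which is a polynomial in $\xi$ times a polynomial in $v$, hence holomorphic on all of $V$. Therefore $g$, already holomorphic on $V$ after this substitution, represents the zero class, giving $H^1 = 0$.

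The one subtlety — and the main thing to check carefully rather than wave at — is the interchange of the (infinite) summation with the coordinate substitution: one must confirm that after substituting $u = \xi^k v - \tau(\xi)$ the resulting double series in $\xi, v$ still converges to a genuine holomorphic function on $V = \mathbb C^2_{\xi,v}$, i.e.\ that the rearrangement is legitimate. This is the same convergence bookkeeping that underlies Lemma \ref{H1tangent} and Theorem \ref{nocompactcurves}, and it goes through because for fixed $i$ only finitely many powers of $\xi$ arise from the substitution with negative exponent contributions controlled, and the original series converges on an annulus; one estimates on compact sets $|\xi| \leq R$, $|v| \leq R$ and uses that $\sum_{l \leq -1} g_{il} z^l$ converges for $|z|$ small, i.e.\ $|\xi|$ large, together with the power-series part in $u$. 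I would phrase this as: the substitution is a morphism of rings $\mathcal O(U \cap V) \to \mathcal O(U \cap V)$ restricting correctly on the overlap, and since the monomials $z^l u^i$ ($l \leq -1$, $i \geq 0$) all map into $\mathcal O(V)|_{U \cap V}$, so does their convergent sum. Concluding, $H^1(\mathcal Z_k(\tau), \mathcal O) = 0$ and, as noted, $H^2$ vanishes trivially.
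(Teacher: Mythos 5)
Your proposal is correct and follows essentially the same route as the paper: kill $H^2$ by the two-set Leray cover, reduce a $1$-cocycle modulo $\mathcal O(U)$ to terms $z^l u^i$ with $l\leq -1$, and observe via the gluing $u=\xi^k v-\sum_{i=1}^{k-1}t_i\xi^{k-i}$ that these become positive powers of $\xi$ times polynomials in $\xi,v$, hence coboundaries from $\mathcal O(V)$. Your explicit attention to the convergence of the rearranged series is a worthwhile refinement that the paper leaves implicit, but it is not a different argument.
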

\begin{proof}
A general $1$-cocycle can be written in the form
\[
\alpha = \sum_{i=0}^\infty \sum_{l=-\infty}^\infty \alpha_{il} z^l u^i \sim
 \sum_{i=0}^\infty \sum_{l=-\infty}^{-1} \alpha_{il} z^l u^i.
\]
In canonical coordinates, we have $u = \xi^k v - \sum_{n=1}^{k-1} t_n \xi^{k-n}$.
Thus, on the $V$-chart 
\[
\alpha \sim \sum_{i=0}^\infty \sum_{l=1}^\infty \alpha_{i,-l} \xi^l \Big( \xi^k v - \textstyle\sum\limits_{n=1}^{k-1} t_n \xi^{k-n} \Big)^i \sim 0
\]
since positive powers of $\xi, v$ are holomorphic on $V$.

$\H^2 (Z_k (\tau), \mathcal F) = 0$ for any coherent sheaf $\mathcal F$ of coefficients, since $Z_k (\tau)$ is Leray-covered by two open sets.
\end{proof}

We thus get the following isomorphisms,
\[
\Pic Z_k (\tau) \simeq \H^2 (Z_k (\tau), \mathbb Z) \simeq \H^2 (Z_k, \mathbb Z) \simeq \H^2 (S^2, \mathbb Z) \simeq \mathbb Z,
\]
where the first isomorphism follows from the exponential sheaf sequence for $Z_k (\tau)$ and the second isomorphism
follows from the fact that $Z_k (\tau)$ is homeomorphic to $Z_k$ as a real manifold. So, any line bundle on $Z_k (\tau)$ is determined by its first Chern class. We write $\mathcal O_{Z_k (\tau)} (n)$ or $\mathcal O (n)$ for the line bundle on $Z_k (\tau)$ with first Chern class $n$.

\begin{lemma}
\label{chernclass}
The line bundle on $Z_k (\tau)$ with first Chern class $n$, denoted $\mathcal O (n)$, can be given the transition matrix $\big(z^{-n}\big)$.
\end{lemma}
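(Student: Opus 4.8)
The plan is to show that every line bundle on $\mathcal Z_k(\tau)$ is isomorphic to one of the stated form by reducing its transition function to a monomial $z^{-n}$ via a change of trivialization on each chart. Since $\operatorname{Pic} \mathcal Z_k(\tau) \simeq \mathbb Z$ has already been established, it suffices to exhibit, for each $n \in \mathbb Z$, a line bundle with transition matrix $(z^{-n})$ and to check that these have pairwise distinct first Chern classes; the map $n \mapsto [\mathcal O(n)]$ will then be the desired isomorphism once we identify its normalization.

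First I would take an arbitrary line bundle $L$ on $\mathcal Z_k(\tau)$, trivialized over the canonical charts $U$ and $V$, with transition function $g \in \mathcal O^*(U \cap V)$ on $U \cap V \simeq \mathbb C^* \times \mathbb C$. Writing $\log g$ as a Laurent-type series in $z$ with coefficients holomorphic in $u$ (using that $H^1(\mathcal Z_k(\tau), \mathcal O) = 0$, just proved, so that the multiplicative and additive Picard computations agree), I would split $\log g = h_U + (\text{terms of the form } c_m z^m u^i) + h_V$ where $h_U$ collects the terms holomorphic on $U$ and $h_V$ those holomorphic on $V$ after the coordinate change $u = \xi^k v - \sum_i t_i \xi^{k-i}$. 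Absorbing $e^{h_U}$ into the $U$-trivialization and $e^{h_V}$ into the $V$-trivialization leaves a transition function whose logarithm is a pure integer multiple of $\log z$ (the only class that is neither holomorphic on $U$ nor on $V$), i.e. $g \sim z^{-n}$ for some $n \in \mathbb Z$. The computation of which terms survive is exactly parallel to the vanishing argument in the preceding lemma $H^1(\mathcal Z_k(\tau),\mathcal O)=0$, just carried out multiplicatively; the key point is that after substituting the gluing relation (\ref{glue}), negative powers of $z$ become positive powers of $\xi$ and all the $\tau$-dependent cross terms are holomorphic on $V$.

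Finally I would pin down the normalization: the line bundle with transition $(z^{-n})$ restricts on the zero section $\ell$ (given by $u = 0$ in $U$, which under (\ref{glue}) maps to the curve $v = \tau|_{u=0}$ in $V$ — or rather, one restricts to a compact curve in the $Z_k$ limit and uses topological invariance of $c_1$) to $\mathcal O_{\mathbb P^1}(n)$, so its first Chern class, computed as the degree of this restriction, is $n$. Since these are pairwise non-isomorphic and $\operatorname{Pic} \mathcal Z_k(\tau) \simeq \mathbb Z$, every line bundle of first Chern class $n$ is isomorphic to the one with transition matrix $(z^{-n})$, which is the assertion. The main obstacle is the bookkeeping in the series manipulation — carefully verifying that, after the substitution of (\ref{glue}), the only obstruction class is the multiple of $\log z$ and that no $\tau$-dependent term can contribute to a nontrivial additive cocycle — but this is routine given the completely analogous argument already spelled out for $H^1(\mathcal Z_k(\tau),\mathcal O)$.
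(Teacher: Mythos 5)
Your proposal is correct, and its essential step -- pinning down that the bundle with transition matrix $\bigl(z^{-n}\bigr)$ has first Chern class $n$ by putting it into the family over $\mathbb C^{k-1}$, restricting to the central fibre $Z_k$, and invoking topological local constancy of $c_1$ -- is exactly the paper's first proof. (The paper also gives a second, slicker normalization: $\bigl(z^{-n}\bigr)$ is the transition function of $\phi^*\mathcal O_{\mathbb P^1}(n)$ for the obvious map $\phi \colon \mathcal Z_k(\tau) \to \mathbb P^1$.) Where you diverge is in the first half: you prove directly that \emph{every} line bundle can be reduced to a monomial transition function by writing $g = z^{-n}e^{h}$ (winding number plus global logarithm on the Stein intersection $\mathbb C^*\times\mathbb C$) and splitting $h = h_U + h_V$ using $H^1(\mathcal Z_k(\tau),\mathcal O)=0$. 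This is a valid multiplicative-Cousin argument, but it is logically redundant here: the paper has already established $\Pic \mathcal Z_k(\tau) \simeq H^2(\mathcal Z_k(\tau),\mathbb Z) \simeq \mathbb Z$ via the exponential sequence, so line bundles are classified by $c_1$ and one only needs to exhibit a single bundle with each Chern class -- which is all the normalization step does. What your extra work buys is a self-contained, cover-level classification that does not pass through sheaf cohomology of $\mathcal O^*$. One small correction: your parenthetical attempt to compute the degree by restricting to ``the zero section $u=0$'' cannot work as stated, since $u=0$ does not close up in the $V$-chart (indeed $v = \sum t_i\xi^{-i}$ there) and $\mathcal Z_k(\tau)$ contains no compact curves at all by Theorem~\ref{nocompactcurves}; you correctly abandon this in favour of the family argument, which is the one to keep.
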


\begin{proof}[First proof]
Let $M$ be the total space of the family given in the proof of Theorem \ref{family}. The matrix $\big(z^{-n}\big)$ defines a line bundle $\mathcal L$ over $M$. The restriction of this line bundle to the central fibre of the family gives $\mathcal O_{Z_k} (n)$ (with transition matrix $\big(z^{-n}\big)$ and Chern class $n$). Since the family is continuous and the first Chern class is a discrete topological invariant it remains constant in the family, hence $c_1 (\mathcal L \vert_{Z_k (\tau)}) = n$ as well.
\end{proof}

\begin{proof}[Second proof]
Define a map
\begin{align}
\begin{tikzpicture}[baseline=-2.6pt,description/.style={fill=white,inner sep=2pt}]
\matrix (m) [matrix of math nodes, row sep=0em, text height=1.5ex, column sep=0em, text depth=0.25ex, ampersand replacement=\&, column 3/.style={anchor=base west}]
{ \phi \colon \&[-.75em] Z_k (\tau) \&[2em] \mathbb P^1 \\
              \&               (z, u)     \&       {[1 : z]}  \\
              \&             (\xi, v)     \&     {[\xi : 1 ]}.\\};
\path[->,line width=.45pt,font=\scriptsize]
(m-1-2) edge (m-1-3)
;
\path[|->,line width=.45pt,font=\scriptsize]
(m-2-2) edge (m-2-3)
(m-3-2) edge (m-3-3)
;
\end{tikzpicture}
\end{align}
If we denote by $T$ the transition function of $\mathcal O_{\mathbb P^1} (n)$, then the transition function of the pullback bundle $\phi^* \mathcal O_{\mathbb P^1} (n) \simeq \mathcal O_{Z_k (\tau)} (n)$ is $\phi^* T = T \circ \phi$, which in canonical coordinates is precisely multiplication by $\big(z^{-n}\big)$.
\end{proof}

\subsection{Cohomology of $Z_k (\tau)$}

We calculate sheaf cohomology of $Z_k (\tau)$ with coefficients in line bundles.

\begin{remark}
\label{H2}
As $Z_k (\tau)$ is covered by two open sets with acyclic intersection, we have that $\H^i (Z_k (\tau), \mathcal F) = 0$ for $i \geq 2$ and any coherent sheaf $\mathcal F$.
\end{remark}

\begin{lemma}
\label{h1deformed}
Let $Z_k (\tau)$ be any nontrivial deformation of $Z_k$. Then $\H^1 (Z_k (\tau), \mathcal O (-n)) = 0$ for any integer $n$.
\end{lemma}

\begin{proof}
As for $Z_k$, this is straightforward if $n \leq 1$. We thus assume that $n \geq 2$. The idea of the proof is to use the fact that a function holomorphic on $V \subset Z_1$ is also holomorphic on $V \subset Z_k (\tau)$ ({\it cf.}\ Lem.\ \ref{stillholomorphic}). The difficult part is thus to show that all cocycles which are {\it nontrivial} in $\H^1 (Z_1, \mathcal O (-n))$ are in fact {\it trivial} in $\H^1 (Z_k (\tau), \mathcal O (-n))$, because all other terms can be removed by functions on $Z_1$. These cocycles are spanned by the terms $z^l u^i$, where $-n + i < l < 0$ and $i \leq n - 2$.

Let $\sigma = \sum_{i=0}^\infty \sum_{l=-\infty}^\infty \sigma_{il} z^l u^i$ be a general $1$-cocycle, {\it i.e.}\ a holomorphic function in the intersection $U \cap V \simeq \mathbb C^* \times \mathbb C$. We may add any function $z^{-n} f$, where $f$ is holomorphic on $V$ without changing the cohomology class of $\sigma$.

First we remove the terms $z^l u^i$ for $-n + i < l < 0$ and $i \leq n - 2$ as follows. Let $t_1, \dotsc, t_{k-1}$ be the constants in (\ref{glue}) and let $m$ be the smallest integer such that $t_m \neq 0$. We start with $i = 0$ and add a suitable multiple of $\xi^{n - 1 + n m} v^n = z^{-n+1} (t_m^n + O (z, u))$ to remove $\sigma_{0,-n+1} z^{-n+1}$. While removing the coefficient of $z^{-n+1}$, we only add to the coefficients of higher powers of $z$ that will be removed in subsequent steps. We continue in the same fashion for the terms $z^l$, where $-n + 1 < l < 0$ by adding suitable multiples of $\xi^{n - s + n m} v^n$ for $1 < s < n$.

For $i \geq 1$, note that $u = \xi^k v - \sum_{i=1}^{k-1} t_i \xi^{k-i}$ is holomorphic on $V$, so that we may use the expressions
\[
\Big( \xi^k v - \textstyle\sum\limits_{n=1}^{k-1} t_n \xi^{k-n} \Big)^i \xi^{n - s + n m} v^n = z^{-n+s} u^i (1 + O (z, u)),
\]
where $1 \leq s \leq n$, to remove the remaining terms $z^l u^i$, where $-n + i < l < 0$ and $i \leq n - 2$. We have added finitely many functions to $\sigma$ and now $\sigma \sim \sigma' = \sum_{i=0}^\infty \sum_{l=-\infty}^\infty \sigma'_{il} z^l u^i$, where the coefficients $\sigma'_{il} = 0$ for $-n + i < l < 0$ and $i \leq n - 2$.

Next, we may add any function $z^{-n} f_V = z^{-n} \sum_{i=0}^\infty \sum_{l=0} f^V_{il} z^{-l} (z u)^i$ to remove all remaining nonzero coefficients of $z^l u^i$ with $l \leq -n + i$ since $z u = \xi^{k-1} v - \sum_{n=1}^{k-1} t_n \xi^{k-n-1}$ is holomorphic on $V$. Finally, nonzero coefficients of the terms $z^l u^i$ for $i,l \geq 0$ may be removed by adding a suitable function holomorphic on $U$.
\end{proof}

\subsection{Subvarieties}

Deforming the complex structure does not change the topology of the manifold, thus for any $\tau$ we still have 
\[
\H^i (Z_k (\tau), \mathbb C)
=
\begin{cases}
\mathbb C & \text{if } i = 0, 2 \\
    0     & \text{otherwise.}
\end{cases}
\]
The de Rham cohomology of $Z_k$ comes from the complex submanifold $\ell \simeq \mathbb P^1 \approx S^2$. However, for a nontrivial deformation $Z_k (\tau)$, this is no longer the case, as the following result shows.

\begin{theorem}
\label{nocompactcurves}
Let $k \geq 2$. A nontrivial deformation of $Z_k$ contains no complex analytic compact curves.
\end{theorem}
\begin{proof}
Let $D$ be a $1$-dimensional subvariety of $Z_k (\tau)$ and let $\mathcal O (D)$ denote the associated line bundle. By Lemma~\ref{chernclass}, $\mathcal O (D)$ is isomorphic to $\mathcal O (j)$ for some $j$  and hence  $D$ can be obtained as the zero locus of a global holomorphic section of $\mathcal O (j)$. 

Recall that, in canonical coordinates, $\mathcal O (j)$ can be given by transition matrix $(z^{-j})$. We now construct a global section.
Over $U$, a global holomorphic section $s$ of $\mathcal O (j)$ is of the form
\[
s \vert_U = \sum_{i=0}^\infty \sum_{l=0}^\infty s_{il} z^l u^i.
\]
Expression (\ref{glue}) gives that $u = \xi^k v - \sum_{n=1}^{k-1} t_n \xi^{k-n}$ on $U \cap V$. Let $1 \leq m \leq k - 1$ be the largest integer such that $t_m \neq 0$. Changing coordinates with the transition function $z^{-j}$, we can write the zero locus of $s$ over $V$ as
\begin{equation}
\label{zerolocus}
s \vert_V = \sum_{i=0}^\infty \sum_{l=0}^\infty s_{il} \xi^{i(k-m) + j - l} \Big( \xi^m v - \textstyle\sum\limits_{n=1}^{m-1} t_n \xi^{k-n} - t_m \Big)^i = 0.
\end{equation}
The term in parentheses expands to
\[
\Big( \xi^m v - \textstyle\sum\limits_{n=1}^{m-1} t_n \xi^{k-n} - t_m \Big)^i = \xi^{im} v^i + \dotsb + (-1)^{i-1} i \, \xi^m v \Big( \sum\limits_{n=1}^{m-1} t_n \xi^{k-n} + t_m \Big)^{i-1} + (-1)^i t_m^i.
\]
Since $t_m \neq 0$ the coefficient $s_{il}$ must be zero if
\[
i (k-m) + j - l < 0
\]
as otherwise the sum would contain a term not holomorphic on $V$. But then each of the terms containing $v$ also contains a positive power of $\xi$.

Thus, on the fibre over $\xi = 0$, equation (\ref{zerolocus}) reduces to
\begin{equation}
\label{zerolocusfibre}
\sum_{i=0}^\infty (-1)^i s_{i,i(k-m)+j} \, t_m^i = 0
\end{equation}
and does not depend on $v$. If (\ref{zerolocusfibre}) holds, then $D$ contains the whole fibre over $\xi = 0$ as a component and is thus not compact; if (\ref{zerolocusfibre}) does not hold then $D$ is contained in $U \simeq \mathbb C^2$ which is affine and thus $D$ is also not compact.
\end{proof}

In particular, $Z_k (\tau)$ contains no complex submanifold with the topology of a 2-sphere for $\tau \neq 0$.

\subsection{Vector bundles on $Z_k (\tau)$}

We generalize the algebraicity and filtrability result for $Z_k$ (Thm.\ \ref{filtrable}) to its classical deformations. We start with a technical lemma.

\begin{lemma}
\label{stillholomorphic}
Let $f_V$ be a function holomorphic on $V \subset Z_1$. Then there is a holomorphic function $f_{\tilde V}$ holomorphic on $V \subset Z_k (\tau)$ such that, written in $U$-coordinates, we have ${f_V}\vert_{U\cap V} = {f_{\tilde V}}\vert_{U\cap \tilde V}$.
\end{lemma}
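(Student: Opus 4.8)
The plan is to make the statement into a concrete comparison between the coordinate changes on $Z_1$ and on $\mathcal Z_k(\tau)$ in overlapping charts, and to exhibit $f_{\tilde V}$ explicitly as a power series whose convergence and holomorphy on $V \subset \mathcal Z_k(\tau)$ follow from that of $f_V$. Recall that on $Z_1$ the gluing is $(\xi,v) = (z^{-1}, z u)$, whereas on $\mathcal Z_k(\tau)$ it is $(\xi,v) = (z^{-1}, z^k u + \tau)$ with $\tau = \sum_{i=1}^{k-1} t_i z^i$. The key algebraic observation is that, written in $U$-coordinates, the $V$-coordinate function on $Z_1$ is $v = zu$, while on $\mathcal Z_k(\tau)$ it is $v = z^k u + \tau$; in particular the monomial $(zu)$ that generates (together with $\xi = z^{-1}$) the ring of functions holomorphic on $V \subset Z_1$ becomes, under the $\mathcal Z_k(\tau)$-identification, the function $z^{k-1} u + \sum_{i=1}^{k-1} t_i z^{i-1}$, which is still holomorphic on $V \subset \mathcal Z_k(\tau)$ because it equals $\xi^{k-1} v - \sum_{n=1}^{k-1} t_n \xi^{k-n-1}$ (exactly the expression used in the proof of Lemma~\ref{h1deformed}).

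Concretely, I would proceed as follows. First, write $f_V$ as a convergent power series in the coordinates holomorphic on $V \subset Z_1$, namely $f_V = \sum_{i,l \geq 0} c_{il}\,\xi^l v^i = \sum_{i,l \geq 0} c_{il}\, z^{-l} (zu)^i$ in $U$-coordinates, using $\xi = z^{-1}$ and $v = zu$ on $U \cap V \subset Z_1$. Second, define $f_{\tilde V}$ to be the function on $V \subset \mathcal Z_k(\tau)$ obtained by substituting for $\xi$ and $v$ their expressions in the $\mathcal Z_k(\tau)$-coordinates: that is, $f_{\tilde V} := \sum_{i,l\geq 0} c_{il}\, \xi^l \bigl(\xi^{k-1} v - \textstyle\sum_{n=1}^{k-1} t_n \xi^{k-n-1}\bigr)^i$, which is manifestly a power series in $\xi$ and $v$, hence (once convergence is checked) holomorphic on $V \subset \mathcal Z_k(\tau)$. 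Third, verify the matching condition: when $f_{\tilde V}$ is rewritten in $U$-coordinates via $(\xi,v) = (z^{-1}, z^k u + \tau)$, the bracket $\xi^{k-1}v - \sum_n t_n \xi^{k-n-1}$ becomes $z^{-(k-1)}(z^k u + \tau) - \sum_n t_n z^{-(k-n-1)} = z u + z^{-(k-1)}\tau - \sum_n t_n z^{n-k+1} = zu$, since $z^{-(k-1)}\tau = \sum_{n=1}^{k-1} t_n z^{n-k+1}$ cancels the subtracted sum exactly. Hence in $U$-coordinates $f_{\tilde V} = \sum_{i,l} c_{il} z^{-l}(zu)^i = f_V\vert_{U\cap V}$, which is the required identity ${f_V}\vert_{U\cap V} = {f_{\tilde V}}\vert_{U \cap \tilde V}$.

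The one point requiring care — and the main obstacle — is convergence: I must check that the formal substitution producing $f_{\tilde V}$ yields an honestly convergent power series on all of $V \subset \mathcal Z_k(\tau) \simeq \mathbb C^2_{\xi,v}$, not merely a formal one. This is where the finiteness of the correction $\tau$ is used: since $k$ is fixed and $\tau$ has only finitely many terms, the substitution $v \mapsto \xi^{k-1}v - \sum_n t_n \xi^{k-n-1}$ is a polynomial change of the $v$-variable of bounded degree, so composing $f_V$ (a convergent power series) with this polynomial substitution and the monomial substitution in $\xi$ preserves convergence on compact subsets of $\mathbb C^2_{\xi,v}$ by the standard estimate that substituting a polynomial into a convergent power series again converges on a (possibly smaller, but still full) polydisc — and here, because $\mathbb C^2$ is exhausted by polydiscs and the substitution maps polydiscs into polydiscs, holomorphy extends to all of $V$. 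Everything else is the routine bookkeeping carried out above; the content is entirely in the identity $z^{-(k-1)}\tau = \sum_{n=1}^{k-1} t_n z^{n-k+1}$ together with this elementary convergence argument.
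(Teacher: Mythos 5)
Your proof is correct and follows essentially the same route as the paper's: both write $f_V$ as a convergent power series in $\xi = z^{-1}$ and $v = zu$ on $Z_1$ and then substitute $zu = \tilde\xi^{k-1}\tilde v - \sum_{n=1}^{k-1} t_n\tilde\xi^{k-n-1}$, which is holomorphic on the $\mathcal Z_k(\tau)$-chart, so that the two expressions come from the same series in $(z,u)$-coordinates. Your explicit attention to convergence of the substituted series (a polynomial change of variables composed with an entire function) is a point the paper passes over more quickly.
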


\begin{proof}
To differentiate the coordinates of $V \subset Z_1$ and $V \subset Z_k (\tau)$, write $\wtilde V = \{ (\tilde \xi, \tilde v) \}$ for the chart of $Z_k (\tau)$, {\it i.e.}\
\begin{alignat*}{4}
       (\xi,        v) &= (z^{-1}, z u)          &&\qquad \text{on $U \cap V \subset Z_1$} \\
(\tilde \xi, \tilde v) &= (z^{-1}, z^k u + \tau) &&\qquad \text{on $U \cap \wtilde V \subset Z_k (\tau)$.}
\end{alignat*}
Now a holomorphic function $f_V$ on $V$ may be written as a convergent power series in the variables $z^{-1}$ and $z u$,
\[
\sum_{i=0}^\infty \sum_{l=0}^\infty f_{il} z^{-l} (z u)^i \text{.}
\]
Now on $\wtilde V \subset Z_k (\tau)$, we have that $z^{-1} = \tilde \xi$ and $z u = \tilde \xi^{k-1} \tilde v - \sum_{i=1}^{k-1} t_i \tilde \xi^{k-i-1}$. We may rewrite $f_V$ in $\wtilde V$-coordinates
\begin{align}
\label{beforeexpansion}
f_{\tilde V} = \sum_{i=0}^\infty \sum_{l=0}^\infty f_{il} \tilde \xi^l \Big( \tilde \xi^{k-1} \tilde v - \textstyle\sum\limits_{i=1}^{k-1} t_i \tilde \xi^{k-i-1} \Big)^i \text{.}
\end{align}
This is a convergent power series in the variables $\tilde \xi$ and $\tilde v$ which we may write as
\[
f_{\tilde V} = \sum_{i=0}^\infty \sum_{l=0}^\infty \tilde f_{il} \tilde \xi^l \tilde v^i
\]
by expanding the factor $\big( \tilde \xi^{k-1} \tilde v - \sum_{i=1}^{k-1} t_i \tilde \xi^{k-i-1} \big)^i$ in (\ref{beforeexpansion}) and rewriting the coefficients. For example, $\tilde f_{00} = \sum_{i=0}^\infty f_{i0} t_{k-1}^i$. Thus $f_{\tilde V}$ is holomorphic on $\wtilde V \subset Z_k (\tau)$. Since both $f_V$ and $f_{\tilde V}$ come from the same power series in $(z, u)$-coordinates, we have $f_V = f_{\tilde V}$ in $(z, u)$-coordinates.
\end{proof}

\begin{theorem}
\label{filtrabledeformed}
Holomorphic bundles over $Z_k (\tau)$ are algebraic and filtrable.
\end{theorem}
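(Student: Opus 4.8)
The plan is to reduce Theorem~\ref{filtrabledeformed} to the already-established case $k=1$, namely Theorem~\ref{filtrable} for $Z_1 = \widetilde{\mathbb C}^2$. The key point is that $Z_1$ and $\mathcal Z_k(\tau)$ are obtained by gluing \emph{the same} chart $U = \mathbb C^2_{z,u}$ along \emph{the same} overlap $U\cap V = \{z\neq 0\}\subset\mathbb C^2_{z,u}$; only the second chart and the gluing with it differ (write $Z_1 = U\cup V_1$ with $(\xi_1,v_1)=(z^{-1},zu)$ versus $\mathcal Z_k(\tau) = U\cup V$ with $(\xi,v)=(z^{-1},z^k u + \tau)$). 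Hence a transition matrix written over $U\cap V$ in the coordinates $(z,u)$ makes literal sense for a bundle on either surface, and Lemma~\ref{stillholomorphic} says that holomorphic (indeed polynomial) changes of trivialization on $V_1$ remain holomorphic (polynomial) on $V$.

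Concretely: given a holomorphic bundle $E$ on $\mathcal Z_k(\tau)$, it is holomorphically trivial on $U$ and on $V$ (both are biholomorphic to $\mathbb C^2$, so this is the Oka--Grauert principle), hence presented by a transition matrix $T\in GL_r(\mathcal O(U\cap V))$. The same $T$ presents a holomorphic bundle $E'$ on $Z_1$. By Theorem~\ref{filtrable}, $E'$ is algebraic and filtrable, so after holomorphic changes of trivialization $A_U\in GL_r(\mathcal O(U))$ and $A_{V_1}\in GL_r(\mathcal O(V_1))$ the matrix $T':=A_{V_1}\,T\,A_U^{-1}$ is upper triangular with diagonal entries $(z^{-n_1},\dots,z^{-n_r})$, and one may arrange (cf.\ the normal form recorded after Definition~\ref{splittingtype} and \cite[Thm.\,3.3]{gasparim1}) that $A_U, A_{V_1}, T'$ all have polynomial entries --- in $\mathbb C[z,u]$, in $\mathbb C[z^{-1},zu]$, and in $\mathbb C[z,z^{-1},u]$ respectively. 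Now I would transport this back: $A_U$ is still holomorphic (and algebraic) on $U\subset\mathcal Z_k(\tau)$; $A_{V_1}$, having entries polynomial in $z^{-1}=\xi$ and $zu = \xi^{k-1}v-\sum_{i=1}^{k-1}t_i\xi^{k-1-i}$ (polynomial on $V$ because $k-1-i\ge 0$), is holomorphic (algebraic) on $V\subset\mathcal Z_k(\tau)$ by Lemma~\ref{stillholomorphic}; and the entries of $T'$, Laurent polynomials in $z$ with $\mathbb C[u]$ coefficients, are algebraic on $U\cap V\subset\mathcal Z_k(\tau)$. Thus $(A_U,A_{V_1})$ is a holomorphic isomorphism over $\mathcal Z_k(\tau)$ from $E$ onto the bundle with the algebraic, upper-triangular transition matrix $T'$: this shows $E$ is algebraic, and the upper-triangular form exhibits a filtration $0=E_0\subset\dots\subset E_r=E$ by sub-bundles with $E_s/E_{s-1}\simeq\mathcal O(n_s)$ (transition function $z^{-n_s}$, cf.\ Lemma~\ref{chernclass}), so $E$ is filtrable. (Alternatively, once filtrability is known, the vanishing $H^1(\mathcal Z_k(\tau),\mathcal O(m))=0$ for all $m$ of Lemma~\ref{h1deformed} forces every extension to split, giving $E\simeq\bigoplus_i\mathcal O(n_i)$ --- algebraic a fortiori, and in fact Theorem~\ref{decomposable}.)

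The one step needing real care --- the main obstacle --- is to check that $A_{V_1}$ remains \emph{invertible} over the functions on $V\subset\mathcal Z_k(\tau)$, not merely that its entries extend. Algebraically this is immediate, since $\det A_{V_1}\in\mathbb C[\xi_1,v_1]^\times=\mathbb C^\times$; in the holomorphic category, $\det A_{V_1}$ is a nowhere-vanishing holomorphic function on $V_1\simeq\mathbb C^2$, hence of the form $e^g$ with $g\in\mathcal O(V_1)$, and its avatar on $V$ supplied by Lemma~\ref{stillholomorphic} is $e^{\tilde g}$, again nowhere vanishing, so $A_{V_1}\in GL_r(\mathcal O(V))$. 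The whole argument rests on comparing $\mathcal Z_k(\tau)$ with $Z_1$ rather than with $Z_k$: the $V_1$-coordinates $(z^{-1},zu)$ of $Z_1$ remain polynomial on $V\subset\mathcal Z_k(\tau)$ (Lemma~\ref{stillholomorphic}), whereas the $V$-coordinate $z^k u$ of $Z_k$ picks up negative powers of $\xi$ and does not.
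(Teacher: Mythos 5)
Your proof is correct and follows essentially the same route as the paper's: present the bundle by a transition matrix over $U\cap V$, reinterpret that matrix as defining a bundle on $Z_1$, normalize it there using Theorem~\ref{filtrable}, and transport the normalizing matrices $A_U, A_{V}$ back to $\mathcal Z_k(\tau)$ via Lemma~\ref{stillholomorphic}. Your additional check that $A_{V_1}$ remains invertible over $V\subset\mathcal Z_k(\tau)$ is a detail the paper leaves implicit, and your argument for it is sound.
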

\begin{proof}
As in Lemma \ref{stillholomorphic}, denote by $\wtilde V$ the chart of $Z_k (\tau)$ with coordinates $\tilde \xi$ and $\tilde v$.

Let $\wtilde E$ be a vector bundle of rank~$r$ over $Z_k (\tau)$ with transition function $T$. The entries of $T$ are functions holomorphic in the intersection $U \cap \wtilde V$ and may thus be written as power series
\[
\sum_{i=0}^\infty \sum_{l=-\infty}^\infty a_{il} z^l u^i.
\]
The same $T$ also defines a bundle $E$ on $Z_1$. As a bundle over $Z_1$, $E$ is algebraic and filtrable by Theorem \ref{filtrable}. In particular, there exist matrices $A_U, A_V$ with entries holomorphic in $U, V \subset Z_1$, respectively, which are invertible for all points in $U, V$, respectively, and such that
\[
A_V T A_U = 
\left(
\begin{tikzpicture}[x=1.8em,y=1.8em,baseline=-4.6em]
\node[inner sep=0pt,shape=circle,baseline=(11.base)] (11) at (1,-1) {$z^{j_1}$\strut};
\node[inner sep=0pt,shape=circle,baseline=(44.base)] (44) at (4,-4) {$z^{j_r}$\strut};
\path[dash pattern=on 0pt off 5pt, line width=1.2pt, line cap=round] (11) edge (44);
\node[inner sep=0pt,shape=circle] (12) at (2,-1) {$\ast$\strut};
\node[inner sep=0pt,shape=circle] (14) at (4,-1) {$\ast$\strut};
\node[inner sep=0pt,shape=circle] (34) at (4,-3) {$\ast$\strut};
\path[dash pattern=on 0pt off 5pt, line width=1.2pt, line cap=round] (12) edge (14);
\path[dash pattern=on 0pt off 6pt, line width=1.2pt, line cap=round] (12) edge (34);
\path[dash pattern=on 0pt off 5pt, line width=1.2pt, line cap=round] (14) edge (34);
\node[inner sep=0pt,shape=circle,baseline=(11.base)] (21) at (1,-2) {$0$\strut};
\node[inner sep=0pt,shape=circle,baseline=(43.base)] (41) at (1,-4) {$0$\strut};
\node[inner sep=0pt,shape=circle,baseline=(43.base)] (43) at (3,-4) {$0$\strut};
\path[dash pattern=on 0pt off 5pt, line width=1.2pt, line cap=round] (21) edge (41);
\path[dash pattern=on 0pt off 5pt, line width=1.2pt, line cap=round] (41) edge (43);
\path[dash pattern=on 0pt off 6pt, line width=1.2pt, line cap=round] (21) edge (43);
\end{tikzpicture}
\right)
\]
where each $\ast$ denotes an {\it algebraic} function on $U \cap V$. By Lemma \ref{stillholomorphic}, we conclude that the entries of $A_U, A_V$ are also holomorphic in $U, \wtilde V \subset Z_k (\tau)$ and thus $A_U, A_V$ also define an isomorphism of $\wtilde E$ with a filtered algebraic bundle.
\end{proof}

\begin{remark}
For the proof of Theorem \ref{filtrabledeformed} we do not need to require $\tau \neq 0$. In other words, to prove algebraicity and filtrability for $Z_k$ (or any of its deformations), it is enough to prove it for $Z_1$.
\end{remark}

\begin{remark}
Note that on $\mathbb P^2$ the only bundles which are filtrable are split, but there is a multitude of bundles which are not filtrable, given that the moduli spaces of stable vector bundles on $\mathbb P^2$ are quasi-projective varieties whose dimensions increase with the second Chern classes, see \cite{okonekschneiderspindler}.
\end{remark}

Theorem \ref{filtrabledeformed} implies that, as for $Z_k$, rank~$2$ bundles over $Z_k (\tau)$ with vanishing first Chern class may be written as extensions of line bundles
\[
\Ext^1 (\mathcal O_{Z_k (\tau)} (j), \mathcal O_{Z_k (\tau)} (-j)) \simeq \H^1 (Z_k (\tau), \mathcal O (-2j))\text{,}
\]
but by Lemma \ref{h1deformed}, $\H^1 (Z_k (\tau), \mathcal O (-2j)) = 0$. Hence, we obtain:

\begin{theorem}
\label{decomposable}
Let $Z_k (\tau)$ be any nontrivial deformation of $Z_k$.
Then, every holomorphic vector bundle on $Z_k (\tau)$ splits as a direct sum of line bundles. 
\end{theorem}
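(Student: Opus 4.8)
The plan is to reduce Theorem~\ref{decomposable} to the two structural results that have already been established in the excerpt: Theorem~\ref{filtrabledeformed}, which says that every holomorphic bundle on $\mathcal Z_k (\tau)$ is algebraic and filtrable by line bundles, and Lemma~\ref{h1deformed}, which says that $H^1 (\mathcal Z_k (\tau), \mathcal O (-n)) = 0$ for every integer $n$. The first result gives us, for a rank $r$ bundle $E$, a filtration $0 = E_0 \subset E_1 \subset \dotsb \subset E_r = E$ with successive quotients $E_i / E_{i-1} \simeq \mathcal O (a_i)$ for some integers $a_i$ (recall from the discussion after Lemma~\ref{chernclass} that line bundles on $\mathcal Z_k (\tau)$ are classified by their first Chern class). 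The strategy is then to show that each of these extensions splits, so that $E \simeq \bigoplus_{i=1}^r \mathcal O (a_i)$.

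First I would handle the rank $2$ case as the base of an induction. A rank $2$ bundle $E$ sits in a short exact sequence $0 \to \mathcal O (a) \to E \to \mathcal O (b) \to 0$, and the extension class lies in $\operatorname{Ext}^1 (\mathcal O (b), \mathcal O (a))$. By Lemma~\ref{extensioncohomology} (applied on $X = \mathcal Z_k (\tau)$, after twisting so that the line bundles are $\mathcal O (j)$ and $\mathcal O (-j)$ with $2j = b - a$), this group is isomorphic to $H^1 (\mathcal Z_k (\tau), \mathcal O (a - b)) = H^1 (\mathcal Z_k (\tau), \mathcal O (-2j))$, which vanishes by Lemma~\ref{h1deformed}. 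Hence every such extension is split. For the inductive step, suppose the claim holds for all bundles of rank $< r$ and let $E$ have rank $r$ with filtration as above. Then $E_{r-1}$ has rank $r-1$, so by the inductive hypothesis $E_{r-1} \simeq \bigoplus_{i=1}^{r-1} \mathcal O (a_i)$, and $E$ fits in $0 \to E_{r-1} \to E \to \mathcal O (a_r) \to 0$. The extension class lies in $\operatorname{Ext}^1 (\mathcal O (a_r), E_{r-1}) \simeq \bigoplus_{i=1}^{r-1} \operatorname{Ext}^1 (\mathcal O (a_r), \mathcal O (a_i))$, and each summand is isomorphic to $H^1 (\mathcal Z_k (\tau), \mathcal O (a_i - a_r)) = 0$ by Lemma~\ref{h1deformed}. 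Therefore the sequence splits and $E \simeq \bigoplus_{i=1}^{r} \mathcal O (a_i)$.

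Alternatively — and this is essentially the route the text sketches in the paragraph preceding the theorem — one can avoid the induction on the filtration length by arguing directly: since $\operatorname{Ext}^1 (\mathcal O (j), \mathcal O (\ell)) \simeq H^1 (\mathcal Z_k (\tau), \mathcal O (\ell - j)) = 0$ for \emph{all} integers $j, \ell$, every short exact sequence of bundles whose outer terms are built from line bundles splits, so a filtrable bundle is automatically a direct sum of its graded pieces. Either way, the only inputs are Theorem~\ref{filtrabledeformed}, Lemma~\ref{extensioncohomology}, and the crucial vanishing Lemma~\ref{h1deformed}.

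The main obstacle here is not in this proof — which is a short formal deduction — but rather lies entirely upstream, in Lemma~\ref{h1deformed}, whose proof in turn depends on Lemma~\ref{stillholomorphic}: the genuinely delicate point is that cocycles $z^l u^i$ with $-n + i < l < 0$, which are nontrivial in $H^1 (Z_1, \mathcal O (-n))$, become trivial in $H^1 (\mathcal Z_k (\tau), \mathcal O (-n))$ because the deformation term $\tau$ makes new combinations such as $\xi^{n - s + nm} v^n$ holomorphic on the $V$-chart, providing coboundaries that were not available before. Given that lemma, the present theorem follows immediately. I would also remark that the hypothesis $\tau \neq 0$ is used only through Lemma~\ref{h1deformed} (which, as stated, actually holds for all $\tau$ including $\tau = 0$ in the range $n \leq 1$, but genuinely requires $\tau \neq 0$ for $n \geq 2$) — for $Z_k = \mathcal Z_k (0)$ the conclusion fails precisely because $H^1 (Z_k, \mathcal O (-2j))$ is nonzero for $j$ large, which is what gives the nontrivial moduli of bundles on $Z_k$ recalled in the introduction.
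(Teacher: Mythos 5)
Your proposal is correct and follows essentially the same route as the paper: the paper's proof is exactly the paragraph preceding the theorem statement, combining Theorem~\ref{filtrabledeformed} (filtrability by line bundles) with Lemma~\ref{h1deformed} (vanishing of $H^1$ of every line bundle) so that all extensions split. You merely make explicit the induction on rank that the paper leaves implicit, which is a reasonable elaboration rather than a different argument.
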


\begin{remark} In contrast with $Z_k (\tau)$, the surfaces $Z_k$ have nontrivial moduli of vector bundles. For example, \cite[Thm.\ 4.11]{ballicogasparimkoppe2} shows that the moduli of rank~$2$ bundles on $Z_k$ with splitting type $j$ has dimension $2j-k-2$.
\end{remark}

\begin{remark} Consider the threefold $W_1 := \Tot (\mathcal O_{\mathbb P^1}(-1) \oplus \mathcal O_{\mathbb P^1}(-1))$.
In \cite[Thm.\ 4.3] {amilburubarmeiercallandergasparim} it is proven that, for appropriate choices 
of numerical invariants, there are isomorphisms of moduli spaces of vector bundles on $W_1$ and moduli spaces of vector bundles on the surfaces $Z_k$. Thus, from the point of view of moduli of bundles this threefold presents similar behaviour to our surfaces. However, from the point of view of deformation theory they are quite different. Indeed, we observe that $\H^1 (W_1, \mathcal T_{W_1}) = 0$, hence there are no classical deformations of $W_1$.
\end{remark}

\subsection{The affine structure of $Z_k (\tau)$}

As a corollary to the splitting principle of Theorem \ref{decomposable}, we are able to show that any nontrivial deformation $Z_k (\tau)$ of $Z_k$ admits the structure of an affine variety. We state part of the proof as a separate lemma.

\begin{lemma}
\label{resolutionproperty}
$Z_k (\tau)$ has the resolution property, {\it i.e.}\ every coherent sheaf has a global resolution by locally free sheaves.
\end{lemma}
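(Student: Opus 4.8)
The plan is to exploit the fact that $\mathcal Z_k(\tau)$ is covered by the two charts $U$ and $\tilde V$, both isomorphic to $\mathbb C^2$, with intersection $U \cap \tilde V$ a Stein open set on which functions are fairly explicit. First I would recall that a coherent sheaf $\mathcal F$ on $\mathcal Z_k(\tau)$ is glued from coherent sheaves $\mathcal F_U$ on $U \cong \mathbb C^2$ and $\mathcal F_{\tilde V} \cong \mathbb C^2$ via an isomorphism over the overlap. Since $\mathbb C^2$ is affine (indeed has the resolution property for free sheaves — every coherent sheaf on an affine variety is a quotient of a free sheaf, and over the polynomial ring $\mathbb C[x,y]$ one even has finite free resolutions), I can find finite free resolutions of $\mathcal F_U$ and $\mathcal F_{\tilde V}$ locally; the issue is to glue these into a \emph{global} locally free resolution on $\mathcal Z_k(\tau)$.

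The key step is the following: I claim that $\mathcal F$ admits a global epimorphism $\mathcal O_{\mathcal Z_k(\tau)}(-N)^{\oplus m} \twoheadrightarrow \mathcal F$ from a direct sum of copies of a single line bundle $\mathcal O(-N)$ for $N \gg 0$. To prove this I would show that, for $N$ sufficiently large, the sheaf $\mathcal F(N) := \mathcal F \otimes \mathcal O(N)$ is generated by its global sections. This is where the earlier cohomology vanishing does the work: using that $\mathcal Z_k(\tau)$ is Leray-covered by the two acyclic charts (Remark~\ref{H2}), together with the vanishing $H^1(\mathcal Z_k(\tau), \mathcal O(-n)) = 0$ for all $n$ (Lemma~\ref{h1deformed}), one controls the \v Cech $H^1$ of $\mathcal F(N)$ and can lift local generators to global sections. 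Concretely: choose finitely many local sections of $\mathcal F$ near the zero section that generate $\mathcal F_U$ as an $\mathcal O_U$-module on a neighbourhood of $\ell$; multiplying by $z^{-N}$ for $N$ large makes them extend over the overlap (the explicit form of the gluing \eqref{glue}, via $u = \xi^k v - \sum t_i \xi^{k-i}$, shows that a pole of order $N$ in $z$ absorbs any fixed polynomial behaviour coming from $\tilde V$), and the same argument on $\tilde V$ — using Lemma~\ref{stillholomorphic} to transport generators — produces finitely many global sections of $\mathcal F(N)$ that generate $\mathcal F(N)$ at every point. Untwisting by $\mathcal O(-N)$ gives the desired surjection $\mathcal O(-N)^{\oplus m} \twoheadrightarrow \mathcal F$.

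Once this is established, the lemma follows by induction: the kernel $\mathcal K$ of $\mathcal O(-N)^{\oplus m} \twoheadrightarrow \mathcal F$ is again coherent, and since every coherent sheaf is generated by a sum of line bundles by the same argument, one obtains $\mathcal O(-N')^{\oplus m'} \twoheadrightarrow \mathcal K$, and continuing yields a (a priori infinite) resolution $\cdots \to \mathcal O(-N_1)^{\oplus m_1} \to \mathcal O(-N_0)^{\oplus m_0} \to \mathcal F \to 0$ by locally free sheaves, which is all the resolution property requires. (If one wants finiteness, one invokes that $\mathcal Z_k(\tau)$ is covered by two copies of $\mathbb C^2$, which is smooth of dimension $2$, so a syzygy argument caps the length; but this is not needed for the statement.)

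I expect the main obstacle to be making the generation-by-global-sections step fully rigorous: one must check that a \emph{single} twist $N$ works simultaneously for generation over $U$ and over $\tilde V$, and that the lifted global sections do not just generate on a neighbourhood of $\ell$ but at every point of both noncompact charts. The explicit coordinate description of the overlap and the total vanishing in Lemma~\ref{h1deformed} are precisely what make this manageable, but the bookkeeping of powers of $z$ versus the mixed monomials $\xi^k v - \sum t_i \xi^{k-i}$ appearing in \eqref{glue} is the delicate part, and is the analogue of the computations already carried out in the proofs of Lemma~\ref{h1deformed} and Theorem~\ref{filtrabledeformed}.
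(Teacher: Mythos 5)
Your overall strategy is sound and genuinely different from the paper's. The paper disposes of the lemma in one line by citing \cite[Prop.\,8.1]{totaro}: $\mathcal Z_k(\tau)$ is quasi-compact with affine diagonal (the two charts and their intersection $\mathbb C^* \times \mathbb C$ are all affine), so the resolution property follows from that general result. What you propose instead is the classical ``ample family of line bundles'' argument carried out by hand, and it does work --- in the algebraic category, which is the setting in which the lemma is actually used (Serre's criterion in Theorem~\ref{affine}). Your route has the small advantage of producing a resolution by direct sums of the explicit line bundles $\mathcal O(-N)$, which is exactly what the proof of Theorem~\ref{affine} consumes; the paper's route is shorter and more robust, but it is a black box.

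Two points in your justification of the key step need repair, though neither is fatal. First, the appeal to Lemma~\ref{h1deformed} and to ``controlling the \v Cech $H^1$ of $\mathcal F(N)$'' is a red herring: $H^1$ measures whether $1$-cocycles are coboundaries, not whether a section over one chart extends to a global section, and no cohomology vanishing is needed here. Likewise ``local sections near the zero section'' is meaningless, since by Theorem~\ref{nocompactcurves} the deformed surface has no zero section. The correct mechanism is simpler and purely algebraic: $U \cap \tilde V$ is the principal open set $D(z) \subset U$ and $D(\xi) \subset \tilde V$, so restricting algebraic sections to the overlap is localization at $z$, resp.\ $\xi$. Hence if $s_1, \dotsc, s_p$ generate $\mathcal F(U)$ as a $\mathbb C[z,u]$-module, each $s_i\vert_{U \cap \tilde V}$ equals $\xi^{-n_i} m_i$ with $m_i \in \mathcal F(\tilde V)$, and since the transition function of $\mathcal O(N)$ is $z^{-N} = \xi^N$, the pair $(s_i, \xi^{N-n_i} m_i)$ is a global section of $\mathcal F(N)$ as soon as $N \geq n_i$; symmetrically for a finite set of generators over $\tilde V$. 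Taking $N$ to be the maximum over the finitely many generators from both charts gives global generation of $\mathcal F(N)$, hence the surjection $\mathcal O(-N)^{\oplus m} \to \mathcal F$, and your induction finishes the proof. Second, be aware that this localization step genuinely uses that $\mathcal F$ is an algebraic coherent sheaf: an analytic section over $U$ such as $e^z = e^{1/\xi}$ has an essential singularity seen from $\tilde V$, so no finite twist clears it, and the analytic version of your argument would require Cartan's Theorem A on the Stein charts or a prior GAGA-type statement rather than the pole-clearing you describe.
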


\begin{proof}
This follows from \cite[Prop.\ 8.1]{totaro} by noting that $Z_k (\tau)$ has affine diagonal since intersections of affine sets are affine.
\end{proof}

\begin{theorem}
\label{affine}
Let $Z_k (\tau)$ be any nontrivial deformation of $Z_k$. Then $Z_k (\tau)$ admits the structure of a smooth affine algebraic variety.
\end{theorem}

\begin{proof}
Let $\mathcal F$ be an arbitrary coherent sheaf on $Z_k (\tau)$. By Lemma \ref{resolutionproperty} $\mathcal F$ admits a global resolution by locally free sheaves
\[
\dotsb \totikz \mathcal F_n \totikz \dotsb \totikz \mathcal F_0 \totikz \mathcal F \totikz 0.
\]
Since $Z_k (\tau)$ is of dimension two, we claim that the kernel of $\mathcal F_2 \toarg{d_1} \mathcal F_1$ is locally free. Since locally free is a local property, it suffices to check this property on the stalks $(\ker d_1)_x$ for $x \in Z_k (\tau)$. Since the local rings $\mathcal O_{Z_k (\tau), x}$ are regular and of dimension $\leq 2$, they are of global dimension $\leq 2$ and free. We may thus truncate the resolution to
\[
0 \totikz \ker d_1 \toarg{i} \mathcal F_1 \toarg{d_0} \mathcal F_0 \toarg{\epsilon} \mathcal F \totikz 0.
\]
We have short exact sequences
\begin{align*}
\begin{tikzpicture}[baseline=-2.3pt,description/.style={fill=white,inner sep=2pt}]
\matrix (m) [matrix of math nodes, row sep=0em,
column sep=2em, text height=1.5ex, text depth=0.25ex, ampersand replacement=\&, column 2/.style={anchor=base west}]
{
0 \& \ker d_1      \& \mathcal F_1 \& \ker d_0        \& 0 \\
0 \& \ker d_0      \& \mathcal F_1 \& \ker \epsilon   \& 0 \\
0 \& \ker \epsilon \& \mathcal F_0 \& \mathcal F      \& 0 \\
};
\draw[->,line width=.45pt,font=\scriptsize]
(m-1-1) edge (m-1-2)
(m-1-2) edge (m-1-3)
(m-1-3) edge (m-1-4)
(m-1-4) edge (m-1-5)
(m-2-1) edge (m-2-2)
(m-2-2) edge (m-2-3)
(m-2-3) edge (m-2-4)
(m-2-4) edge (m-2-5)
(m-3-1) edge (m-3-2)
(m-3-2) edge (m-3-3)
(m-3-3) edge (m-3-4)
(m-3-4) edge (m-3-5)
;
\end{tikzpicture}
\end{align*}
where we have used the isomorphisms $\coim i \simeq \im i$ and $\coim d_0 \simeq \im d_0$ which hold in any Abelian category, here $\Coh (Z_k)$, as well as $\im i \simeq \ker d_0$ and $\im d_0 \simeq \ker \epsilon$ since the resolution is an {\it exact} sequence.

We have that $\ker d_1$, $\mathcal F_1$ and $\mathcal F_0$ are acyclic, since they are direct sums of line bundles by Theorem \ref{decomposable}, which have no higher cohomology (by Lemma \ref{h1deformed}). Thus in each of the above sequences, the first two terms are acyclic, so is the third by applying the long exact sequence in cohomology iteratively. This proves that $\mathcal F$ is acyclic. Since $\mathcal F$ was arbitrary, applying Serre's criterion \cite[Thm.\ III.3.7]{hartshorne} we conclude that $Z_k (\tau)$ is affine.
\end{proof}

\begin{corollary}
\label{grauert}
Moduli spaces of holomorphic vector bundles with fixed topological invariants are trivial, {\it i.e.}\ consist of a single point.
\end{corollary}

\begin{proof}
Affine varieties are Stein and the Grauert--Oka principle \cite{grauert} implies that the holomorphic and topological classifications of complex vector bundles coincide.
\end{proof}

\begin{remark}
We used filtrability and algebraicity of vector bundles over the deformations together with the vanishing of their higher cohomology to prove affineness of the deformations.

The referee pointed out that, conversely, our results for vector bundles over the deformations may be deduced from the affineness of the deformations. Our proofs can thus be shortened substantially by first showing affineness of the deformations, which may be proved by appealing to the compactification to Hirzebruch surfaces whose deformation theory is well understood: When a Hirzebruch surface $F_k$ deforms to $F_m$ with $m < k$ (where $m \equiv k$ $\mathrm{mod}\;2$), the irreducible divisor $F_k \setminus Z_k$ with self-intersection $k$ decomposes into a sum of divisors, preserving the self-intersection. An irreducible divisor in $F_m$ has self-intersection at most $m$, and the divisor with self-intersection $k > m$ is the sum of the line at infinity with self-intersection $m$ as well as $\frac{k-m}2$ copies of the fibre. This sum is now an ample divisor as the intersection with any irreducible curve is positive; its complement, a deformation of $Z_k$, is therefore affine.

We nevertheless chose to maintain our direct proofs, working only with the noncompact surfaces, because this approach also proved to be useful for other noncompact spaces, which may admit larger families of deformations than those obtained from a compactification. For instance, for the Calabi--Yau threefold $W_2 := \Tot (\mathcal O_{\mathbb P^1} (-2) \oplus \mathcal O_{\mathbb P^1}) \simeq Z_2 \times \mathbb C$, the cohomology $\H^1 (W_2, \mathcal T_{W_2})$ is infinite dimensional over $\mathbb C$ and \cite{GRS,GS} show that indeed infinitely many directions of this vector space produce inequivalent deformations.
\end{remark}

\section{Applications to the theory of instantons}
\label{instantons}

$\mathrm{SU} (2)$-instantons on $Z_k$ correspond to framed rank~$2$ holomorphic bundles on $Z_k$ with vanishing first Chern class \cite[Prop.\ 5.3]{GKM}.

The charge of such an instanton is associated to the local holomorphic Euler characteristic of the bundle. We recall the definition.

\begin{definition}[\cite{GKM}]
Let $X_k$ denote the variety obtained from $Z_k$ by contracting the zero section to a point $x \in X_k$. Hence $X_k$ is singular at $x$ if $k > 1$. Let $\pi \colon Z_k \rightarrowtikz X_k$ denote the contraction map. Let $E$ be a rank~$2$ bundle (or any reflexive sheaf) on $Z_k$, and define a skyscraper sheaf $Q$ by the exact sequence
\[
0 \rightarrowtikz \pi_* E \rightarrowtikz (\pi_* E)^{\vee\vee} \rightarrowtikz Q \rightarrowtikz 0.
\]
Then the {\it local holomorphic Euler characteristic} of $E$
is 
\[
\label{euler}
\chi_{\mathrm{loc}} (E) := \chi(\pi_* E, x) = 
h^0 (X, Q) + 
h^0 (X, R^1\pi_*E) \text{.}
\]
In light of the Kobayashi--Hitchin correspondence for $Z_k$ proved in \cite[Prop.\ 5.3]{GKM}, we also refer to this number as the {\it charge} of a corresponding instanton on $Z_k$. 
\end{definition}

We observe that not all bundles on $Z_k$ correspond to instantons. In fact, \cite[Cor.\ 5.5]{GKM} shows that an $\operatorname{SL} (2, \mathbb{C})$-bundle $E$ over $Z_k$ represents an instanton if and only if its splitting type (see Def.\ \ref{splittingtype}) is a multiple of $k$. In this case \cite[Prop.\ 4.1]{GKM} implies that the restriction of $E$ to $Z_k \setminus \ell$ is trivial, and hence $E$ can be extended to a bundle $\overbar E$ on the Hirzebruch surface $F_k$ which is trivial along the line at infinity $\ell_\infty$.

As the restriction of $\overbar E$ to an analytic neighbourhood of $\ell_\infty$ is trivial, the charge of $\overbar E$ only depends on the neighbourhood of $\ell$, which is $Z_k$. That is, we have
\[
\chi (\overbar E) = \chi_{\mathrm{loc}} (E).
\]

The moduli space of instantons of charge $j \equiv 0$ $\mathrm{mod}\; k$ on $Z_k$ is a smooth quasi-projective variety of dimension $2j-k-2$ \cite[Thm.\ 4.11]{ballicogasparimkoppe2}. In terms of vector bundles the statement is that the moduli space of rank~$2$ holomorphic bundles on $Z_k$ with vanishing first Chern class and $\chi_{\mathrm{loc}} = j$ contains an open dense subset isomorphic to 
$\mathbb P^{2j-k-2}$ minus a closed subvariety of codimension at least $k + 1$.

Here, we do not develop a full theory of instantons on the deformed surfaces $Z_k (\tau)$. However, in light of \S \ref{relationtohirzebruch} we may apply our results to instantons by using the Kobayashi--Hitchin correspondence for Hirzebruch surfaces, and then restricting to $Z_k (\tau)$.

\begin{definition}
\label{extendstrivially}
Let $D$ be a divisor on a smooth complex surface $X$. We say that a holomorphic vector bundle $E$ on $X \setminus D$ {\it extends trivially} to $X$, if there exists a holomorphic vector bundle $\overbar E$ on $X$ such that $\overbar E \vert_{X \setminus D} = E$ and $\overbar E \vert_D$ is trivial.
\end{definition}

Now let $D$ be a divisor on the Hirzebruch surface $F_k$. The usual Kobayashi--Hitchin correspondence \cite{LT}
\[
\left\{
\begin{array}{c}
\text{irreducible $\mathrm{SU} (2)$-instantons} \\ 
\text{of charge $n$}
\end{array}
\right\}
\leftrightarrowtikz
\left\{
\begin{array}{c}
\text{stable $\operatorname{SL} (2, \mathbb C)$-bundles} \\
\text{with $c_2 = n$}
\end{array}
\right\}
\]
takes an instanton whose charge is held on $F_k \setminus D$ to a bundle which is trivial along $D$.

In further detail, we actually need to consider {\it framed} bundles on ${Z_k(\tau)}$ as follows. By \S \ref{relationtohirzebruch}, ${Z_k(\tau)} \subset F_m$ for some $m \leq k$. Let $D$ be the divisor $F_m \setminus {Z_k(\tau)}$, {\it i.e.}\ $F_m = {Z_k(\tau)} \sqcup D$ as a disjoint union. Let $N$ be a small open (analytic) tubular neighbourhood of $D$ in $F_m$, and denote by $N^\circ := N \setminus D$ the deleted neighbourhood. Thus, we can also think of $N^\circ$ as an open neighbourhood of infinity inside ${Z_k(\tau)}$; let us denote this open subset of ${Z_k(\tau)}$ by ${Z_k^\circ(\tau)}$.

\begin{definition}
\begin{enumerate}
\item A {\it framed} rank~$2$ bundle on ${Z_k (\tau)}$ is a pair $(E, s)$ where $E$ is a bundle on ${Z_k(\tau)}$ and $s = (s_1, s_2)$ is a trivialization of $E$ over ${Z_k^\circ (\tau)}$, that is, for each point $p \in {Z_k^\circ(\tau)}$ the vectors $s_1 (p)$ and $s_2 (p)$ are linearly independent. 
\item A {\it framed} rank~$2$ bundle on $F_m$ (resp.\ on $N$) is a pair $(E', f)$ where $E'$ is a bundle on $F_m$ (resp.\ on $N$) and $f = (f_1, f_2)$ is a trivialization of $E'$ over $N^\circ$, that is, for each point $p \in N^\circ$ the vectors $f_1 (p)$ and $f_2 (p)$ are linearly independent. 
\end{enumerate}
\end{definition}

Given framed bundles $(E, s)$ on $Z_k(\tau)$ and $(E', f)$ on $N$ we obtain a framed bundle $(E, s) \cup_{s = f} (E', f)$ on $F_m$ by identifying the framings, {\it i.e.}\ by setting $(s_1 (p), s_2 (p)) = (f_1 (p), f_2 (p))$ for all $p \in N^\circ$. (This procedure is called {\it holomorphic patching} in \cite{gasparim3}.) In order to describe instantons on $Z_k (\tau)$ we consider the case when $E'$ is the trivial bundle over $N$, because the charge has no contribution from a neighbourhood of infinity. We denote the resulting bundle on $F_m$ by $\overbar E$. Note that then $E$ extends trivially to $\overbar E$ in the terminology of Definition \ref{extendstrivially}, but the trivialization over $N^\circ$ is extra data.

\begin{lemma}
\label{split}
Consider $E = \mathcal O_{Z_k(\tau)}(j) \oplus \mathcal O_{Z_k(\tau)}(-j)$ together with a framing $s$ on ${Z_k^\circ(\tau)}$ and let $\overbar E$ be a framed extension of $E$ to the Hirzebruch surface $F_m$ as just described. Then $\overbar E$ is a split bundle over $F_m$.
\end{lemma}

\begin{proof} We write 
\[
0 \totikz \mathcal O_{Z_k(\tau)}(j) \toarg{i} E \totikz \mathcal O_{Z_k(\tau)}(-j)\rightarrowtikz 0
\]
and denote by $L$ the image of $\mathcal O_{Z_k(\tau)}(j)$ by $i$ inside $E$. We claim that $L$ extends to a line bundle on  $F_m$.
Since $E$ is split, we can choose the framing $s$ to be such that $s_1(p) \in L$ for all $p \in {Z_k^\circ(\tau)}$. Now, just take the trivial line bundle over $N$ framed by $f_1$ over $N^\circ$ and extend $L$ by identifying $s_1 = f_1$.

Inverting the sequence, we write 
\[
0 \totikz \mathcal O_{Z_k(\tau)}(-j) \toarg{t} E \totikz \mathcal O_{Z_k(\tau)}(j)\rightarrowtikz 0
\]
and now denote by $L^{-1}$ the image of $\mathcal O_{Z_k(\tau)}(-j)$ by $t$ inside $E$. Extend $L^{-1}$ to $F_m$ by choosing a framing $s_2$ and identifying to the framing $f_2$ over $N^\circ$ of a trivial line bundle over $N$. 
Since $E$ is split over ${Z_k(\tau)}$ we can choose $s_1$ and $s_2$ such that the  pair $(s_1,s_2)$ is linearly independent, hence 
it is a framing of the trivial bundle over $N^\circ$. Because $N^\circ$ is open and connected, any two framings of the trivial bundle over $N^\circ$ are related by a change of coordinates. So, the fact that the bundles $L$ and $L^{-1}$ extend to $F_m$ does not depend on the choice of framings. We conclude that $\overbar E$ splits. 
\end{proof}

\begin{theorem}
\label{empty}
Let $Z_k (\tau)$ be a nontrivial deformation of $Z_k$. Then the moduli space of (framed) irreducible $\operatorname{SU} (2)$-instantons on $Z_k (\tau)$ is empty.
\end{theorem}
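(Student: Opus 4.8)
The plan is to argue that any hypothetical instanton on $\mathcal Z_k(\tau)$ must come, via the Kobayashi--Hitchin correspondence for $F_k \setminus D$ discussed above, from a stable $\mathfrak{sl}(2,\mathbb C)$-bundle on $\mathcal Z_k(\tau)$ with $c_1 = 0$, and then invoke Theorem~\ref{decomposable} to conclude that no such \emph{stable} bundle exists, since every bundle on $\mathcal Z_k(\tau)$ splits. First I would fix the geometric setup: using the isomorphism $M \hookrightarrow \widetilde M$ of the proposition in \S\ref{relationtohirzebruch}, the deformed surface $\mathcal Z_k(\tau)$ sits inside a deformed Hirzebruch surface $\widetilde F_k$ (the fibre $\widetilde\pi^{-1}(\tau)$) as the complement of a divisor $D_\tau$, where $D_\tau$ is the image of $\ell_\infty$ together with the fibre over one point, exactly as $Z_k \subset F_k$ is the complement of $\ell_\infty \cup (\text{a fibre})$ in the undeformed case. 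Since $\mathcal Z_k(\tau)$ is affine (Theorem~\ref{affine}), it contains the full ``charge'' of any instanton, so an instanton on $\mathcal Z_k(\tau)$ extends to a stable bundle $\overline E$ on $\widetilde F_k$ trivial along $D_\tau$, whose restriction to $\mathcal Z_k(\tau)$ is a stable $\mathfrak{sl}(2,\mathbb C)$-bundle extending trivially to $\widetilde F_k$.

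Next I would observe that such a restriction $\overline E \vert_{\mathcal Z_k(\tau)}$ is a holomorphic (hence, by Theorem~\ref{filtrabledeformed}, algebraic) rank $2$ bundle on $\mathcal Z_k(\tau)$ with $c_1 = 0$. By Theorem~\ref{decomposable} it splits as $\mathcal O(a) \oplus \mathcal O(-a)$ for some $a \geq 0$; moreover, since it extends \emph{trivially} to the compact $\widetilde F_k$, restricting to a general fibre $\mathbb P^1$ forces the splitting type to be $(0,0)$, so $\overline E \vert_{\mathcal Z_k(\tau)} \simeq \mathcal O \oplus \mathcal O$ is trivial. A trivial bundle is decomposable, hence its associated connection is reducible, contradicting irreducibility of the instanton. (Equivalently: the moduli space of \emph{stable} bundles of this type is empty because stability is incompatible with a direct sum decomposition of equal-degree line bundles.) Therefore the moduli space of irreducible $\operatorname{SU}(2)$-instantons on $\mathcal Z_k(\tau)$ is empty.

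The main obstacle I anticipate is \emph{justifying the Kobayashi--Hitchin step in the deformed setting} — i.e. that an instanton on the noncompact $\mathcal Z_k(\tau)$ genuinely corresponds to a stable bundle on $\mathcal Z_k(\tau)$ that extends trivially to the compact $\widetilde F_k$. In the undeformed case this rests on \cite[Prop.\,4.1, Cor.\,5.5]{GKM} (triviality of $E\vert_{Z_k \setminus \ell}$ and the characterization of which bundles are instantons), together with the correspondence \cite{LT} for $F_k$ and the version in \cite{Sa} for the quasi-projective piece; for $\widetilde F_k$ one needs the analogous compactness/triviality-at-infinity input. The cleanest route is probably to note that $\widetilde F_k$ is itself a (small, projective) deformation of $F_k$ with $D_\tau$ deforming $\ell_\infty \cup (\text{fibre})$, so the Hirzebruch-surface Kobayashi--Hitchin correspondence and the trivial-extension mechanism transport verbatim; then the only genuinely new ingredient is Theorem~\ref{decomposable}, which does all the work by killing stability. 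If one prefers to avoid $\widetilde F_k$ entirely, an alternative is to argue directly on $\mathcal Z_k(\tau)$: by Theorem~\ref{decomposable} every $\mathfrak{sl}(2,\mathbb C)$-bundle on $\mathcal Z_k(\tau)$ is a sum of line bundles and hence never stable (with respect to any reasonable polarization, the two summands having equal degree $0$), so whatever form the Kobayashi--Hitchin correspondence takes it can have no irreducible instantons in its image.
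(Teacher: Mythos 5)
Your proposal is correct and follows essentially the same route as the paper: compactify $\mathcal Z_k(\tau)$ inside the deformed Hirzebruch surface (which the paper identifies concretely as a lower Hirzebruch surface $F_{k-2l}$), invoke the Kobayashi--Hitchin correspondence relative to the boundary divisor, and let Theorem~\ref{decomposable} do the work by forcing every bundle to split, so that the only bundle extending trivially to the compactification is the trivial one, which is not stable. One small slip worth noting: $Z_k$ is the complement in $F_k$ of $\ell_\infty$ alone, not of $\ell_\infty$ together with a fibre, and correspondingly the boundary divisor $D_\tau$ need not have that reducible form (for $k=2$, for instance, it is a smooth $(1,1)$-curve in $F_0 \simeq \mathbb P^1 \times \mathbb P^1$); this does not affect the argument.
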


\begin{proof}
Deformations of $Z_k$ extend to deformations of its compactification $F_k \supset Z_k$, see \S \ref{relationtohirzebruch}. The Hirzebruch surface $F_k$ deforms to a lower Hirzebruch surface $F_m$, where $m < k$ (with $m \equiv k$ $\mathrm{mod}\;2$) depends on $\tau \in \H^1 (F_k, \mathcal T_{F_k}) \simeq \H^1 (Z_k, \mathcal T_{Z_k})$. We choose $D$ to be the complement of $Z_k (\tau) \subset F_m$. Under the Kobayashi--Hitchin correspondence for $F_m$, the moduli space of irreducible $\operatorname{SU} (2)$-instantons is in one-to-one correspondence with stable holomorphic bundles. To find instantons on $Z_k (\tau)$ we are thus looking for stable bundles on $F_m$ which are trivial on $D$. By Theorem \ref{decomposable}, all holomorphic vector bundles on $Z_k (\tau)$ are split. But by Lemma \ref{split} framed bundles on $F_m$ which restrict to split bundles on $Z_k (\tau)$ are not stable.
\end{proof}

Informally, the fact that instantons disappear after a classical deformation may also be understood as a consequence of Theorem \ref{nocompactcurves}, since there is no compact curve to hold the local charge.

\begin{remark}
Even though the above line of argument seems to suggest that the behaviour of instanton moduli in families might be better behaved in the compact case, we stress that at least in the context of the instanton partition function \cite{Ne}, the noncompactness of the underlying surface is essential for the nontriviality of the theory.
\end{remark}

\paragraph{\bf Acknowledgements} We thank Barbara Fantechi and Pushan Majumdar for helpful discussions and the anonymous referee for the many detailed suggestions greatly improving the exposition of the paper. S.\,B.\ would like to thank the Studienstiftung des deutschen Volkes for support. Part of this paper was written while S.\,B.\ was visiting the Department of Physics of UNAB in Santiago and we thank Per Sundell for his hospitality and support under CONICYT grant DPI 20140115. E.\,G.\ was partially supported by a Simons Associateship grant of ICTP and Network grant NT8 of the Office of External Activities, ICTP, Italy.

\section*{References}

\end{document}